\theoremstyle{plain}
\newtheorem{thm}{Theorem}[section]
\newtheorem{pro}[thm]{Problem}
\newtheorem{lem}[thm]{Lemma}
\theoremstyle{definition}
\newtheorem{ass}{Assumption}[section]
\newcommand{\eps}{\varepsilon}
\makeatletter\@addtoreset{equation}{section} \makeatother
\begin{document}		
	\title{ Stochastic Maximum Principle for Fully Coupled Nonlinear FBS$\Delta$Es under Generalized Monotonicity and LQ Control Applications
		\thanks{Qingxin Meng was supported by the National
			Natural Science Foundation of China ( No.12271158) and  the Key Projects of Natural Science Foundation of Zhejiang Province (No. LZ22A010005).}}

	\date{}
	
	\author[a]{Zhipeng Niu}
	\author[b]{Jun Moon}
	\author[a]{Qingxin Meng\footnote{Corresponding author.
			\authorcr
			\indent E-mail address:zpengniu@163.com(Z. Niu),
junmoon@hanyang.ac.kr          mqx@zjhu.edu.cn (Q.Meng) }}

	\affil[a]{\small{Department of Mathematical Sciences, Huzhou University, Zhejiang 313000,PR  China}}
	
	\affil[b]{\small{Department of Electrical
Engineering, Hanyang University,
Seoul, 04763, South Korea}}

	\maketitle

	\begin{abstract}
This paper investigates the optimal control problem for a class of nonlinear fully coupled forward-backward stochastic difference equations (FBS$\Delta$Es). Under the convexity assumption of the control domain, we establish a variational formula for the cost functional involving the Hamiltonian system and adjoint equations, deriving both necessary and sufficient optimality conditions via the Pontryagin maximum principle. Innovatively, we employ a generalized monotonicity framework to ensure the existence and uniqueness of solutions for nonlinear systems and directly derive variational inequalities through the convexity properties of the Hamiltonian function, simplifying the analysis of fully coupled systems. As an application, we formulate a linear-quadratic (LQ) optimal control problem inspired by energy storage scheduling (a real-world example) to demonstrate the effectiveness of our theoretical results. The study reveals that discrete-time FBS$\Delta$Es models offer significant computational advantages for practical systems with future-dependent constraints, such as power dispatch and financial decision-making, providing a new theoretical foundation for high-dimensional optimal control problems.

	\end{abstract}

	\textbf{Keywords}: Forward-Backward Stochastic Difference Equations,   LQ Problem, Hamiltonian System , Maximum principle
	
	\section{ Introduction}
In the expansive field of optimal control theory, the stochastic maximum principle (SMP) stands as a cornerstone methodology, offering a robust theoretical framework for addressing complex control problems under uncertainty. Its introduction marked a significant leap forward, extending the scope of control theory from deterministic to stochastic systems, thereby broadening the range of applications and enabling the control of more realistic, noise-driven systems.

The roots of the stochastic maximum principle trace back to the deterministic maximum principle, introduced by Pontryagin, which has long been central to control theory. The pioneering works of Kushner  \cite{ref-1} and Bismut \cite{ref-2} successfully extended this principle to stochastic environments, laying the foundation for the rapid development of stochastic control theory. At its core, the stochastic maximum principle transforms optimal control problems into the task of solving forward-backward stochastic differential equations (FBSDEs), enabling the derivation of optimal control strategies through the Hamiltonian function.

The exploration of the stochastic maximum principle in forward stochastic differential equations (FSDEs) has seen substantial progress. Bismut's seminal work in 1978 \cite{ref-3} provided a comprehensive review of stochastic optimal control theory, establishing multiple maximum principles and investigating their duality applications, thus setting the stage for future developments. In 1990, Peng \cite{ref-4} made a major breakthrough by deriving the maximum principle for nonlinear optimal control problems under general conditions, addressing challenges such as non-convex control domains and control-dependent diffusion terms, significantly expanding the applicability of the stochastic maximum principle. In 1993, Peng \cite{ref-5} further advanced the field by deriving the maximum principle for forward-backward stochastic differential equations (FBSDEs) under the assumption of a convex control domain, inspiring extensive research into increasingly complex systems.  For a truly all - encompassing review that delves into every aspect of this body of research, one can refer to a multitude of sources including, but not limited to, \cite{ref-7,ref-8,ref-9,ref-10,ref-11, ref-111}, etc.

The fusion of forward and backward stochastic differential equations led to the development of fully coupled forward-backward stochastic differential equations (FBSDEs), where the initial and terminal states of the system are interdependent. For a long time, proving the existence and uniqueness of solutions for such systems posed a significant challenge, hindering progress in the corresponding stochastic maximum principle. In 1995, Hu and Peng \cite{ref-12} made a landmark contribution by employing continuation methods to prove, for the first time, the existence and uniqueness of solutions for fully coupled FBSDEs. This work was later refined by Peng and Wu \cite{ref-13} and Yong \cite{ref-14}. More recently, Yu \cite{ref-15}, inspired by stochastic linear-quadratic problems, introduced a generalized monotonicity framework and successfully demonstrated the existence and uniqueness of solutions for a class of FBSDEs with coupled initial and terminal values, reinvigorating the field.

The literature on the stochastic maximum principle for fully coupled forward-backward stochastic differential equations (FBSDEs) in continuous time has grown increasingly rich, encompassing more complex models and diverse applications. The seminal work by Wu in 1998 \cite{ref-16} pioneered the study of optimal control systems for fully coupled FBSDEs, establishing the maximum principle under the assumption of a convex control domain. In 2006, Shi and Wu \cite{ref-17} extended these findings to systems where the diffusion term is independent of the control, proving the corresponding maximum principle without requiring convexity assumptions on the control domain. Subsequent advancements include Huang and Shi's \cite{ref-18} exploration of time-delay systems and Hu, Ji, and Xue's \cite{ref-19} innovative decoupling method. More recently, Wang et al. \cite{ref-20} investigated mean-field systems under partial observation, deriving variational formulas and establishing both necessary and sufficient optimality conditions. For further significant contributions, refer to the comprehensive literature \cite{ref-131,ref-141,ref-151,ref-161} in this field, among others.

 {Alongside the development of continuous-time systems, discrete-time stochastic difference equations have garnered increasing attention.  Discrete-time systems are more representative of real-world applications, as even continuous-time systems are often discretized for analysis and computational control. The rise of deep learning and the growing need for efficient numerical solutions further emphasize the importance of discrete-time systems. }Compared to continuous-time FBSDEs, fully coupled stochastic difference equations (FBS$\Delta$Es) offer significant computational advantages, making them more suitable for numerical computation and modeling in practical applications, especially in fields like finance, control, and optimization. For instance, discrete stochastic systems with time-varying delays have been shown to admit delay-dependent mean-square stability criteria formulated via linear matrix inequalities (LMIs), enabling efficient computational verification and controller synthesis \cite{gao2007discrete}. Moreover, for nonlinear discrete-time stochastic systems with time-varying delays, finite-time stochastic stability (FTSS) can be guaranteed using discrete-time Lyapunov–Krasovskii functionals and LMI-based piecewise state-feedback controllers, as validated by numerical simulations \cite{liu2024finite}. Furthermore, in the context of epidemiological modeling, discrete-time stochastic approaches—such as those applied to tuberculosis dynamics with relapse and Lévy noise—support robust existence and extinction/persistence results alongside discrete numerical approximation, demonstrating the rich modeling capability of discrete stochastic frameworks \cite{benazzouz2024discontinuous}.

In addition to its theoretical appeal, this study is also driven by strong practical motivation. In real-world applications such as energy storage scheduling, decision-makers must determine the present charging or discharging strategy based on uncertain future energy prices, load demands, and regulatory constraints. These decisions inherently involve anticipating future states and costs, leading to time-inconsistent objectives that require bidirectional temporal reasoning.
Mathematically, this motivates the formulation of \emph{fully coupled} FBS$\Delta$Es, in which the forward state variable \( x_k \) (e.g., energy level) depends not only on the current control \( u_k \) but also on the future backward variable \( y_{k+1} \), which typically encodes future costs or constraints. Conversely, the backward variable \( y_k \) is influenced by the current state \( x_k \) and control \( u_k \), resulting in a bidirectionally dependent dynamic, such as:
$$
x_{k+1} = f(k, x_k, u_k, y_{k+1}), \quad
y_k = g(k, x_k, u_k, y_{k+1}).
$$
This fully coupled structure is realistic and essential in modeling systems with future-dependent constraints, such as power systems, financial portfolio optimization, and dynamic resource allocation.
A more detailed background introduction is provided in Section 4.

Despite its practical significance, the stochastic maximum principle (SMP) for general nonlinear fully coupled FBS$\Delta$Es in discrete time remains relatively underdeveloped. Most existing results rely on linear assumptions or partial coupling.

In recent years, theoretical and applied research on FBS$\Delta$Es has gained widespread attention, particularly in China. Notable contributions include the work of Zhang Huanshui, Xu Juanjuan, and their collaborators \cite{ref-21,ref-22,ref-23}, who studied the solution theory of linear FBS$\Delta$Es using decoupling theory for linear-quadratic optimal control problems. Additionally, Ji  and  Liu \cite{ref-24} established the existence and uniqueness of solutions for nonlinear FBS$\Delta$Es under traditional monotonicity assumptions based on martingale representation theory.

In modern applied mathematics and control theory, discrete-time stochastic control problems occupy a critical position, with applications in financial engineering, systems engineering, and communication technology. In particular, mean-field-type discrete-time stochastic control problems have recently attracted significant interest due to their ability to model the coupling relationships between individual behaviors and the average state of a group. These problems provide a more realistic theoretical framework for modeling complex systems, highlighting the dynamic behavior of multi-agent systems.

Initial research on the discrete-time maximum principle laid the foundation for further development in this field. Early work proposed basic maximum principle concepts for simple discrete-time models, which spurred more comprehensive studies.
Lin and Zhang \cite{ref-26} constructed new models for discrete-time stochastic systems with multiplicative noise, proposing corresponding maximum principles and broadening the research scope.
Wu and Zhang \cite{ref-27} derived necessary and sufficient optimality conditions for discrete-time stochastic control problems with convex control domains, including discrete-time stochastic games, thereby establishing a rigorous theoretical framework.
 Dong, Nie, and Wu \cite{ref-28} made significant strides in studying mean-field-type discrete-time stochastic control problems. These problems present numerous theoretical challenges, especially due to the presence of mean-field terms.
 Their theoretical analysis and case studies demonstrated the practical effectiveness of the derived expression in solving mean-field-type discrete-time stochastic control problems, providing a new theoretical tool for further research.
 Dong, Nie, and Wu (2023) \cite{ref-32}  studied the maximum principle for discrete - time stochastic optimal control problems with delay, overcame difficulties from delayed responses via an adjoint operator technique to derive adjoint equations and duality relations, and obtained the maximum principle for optimal control.
 In 2024, Ji and Zhang \cite{ref-29} analyzed the properties of solutions to infinite-horizon backward stochastic difference equations and proposed related stochastic recursive control strategies. In the same year, Song and Yao derived second-order necessary optimality conditions for discrete-time stochastic systems, offering more precise criteria for optimal solutions. Ahmadova and Mahmudov \cite{ref-30} presented a new formulation of the stochastic maximum principle for discrete-time mean-field optimal control problems, proving its validity through experiments.  In 2024, Song and Wu \cite{ref-31} proposed a general theoretical framework for discrete-time stochastic control problems and demonstrated its generality and effectiveness through case studies. These advancements have greatly enriched the theoretical framework of the maximum principle for discrete-time stochastic systems. The study of discrete-time forward-backward systems holds significant theoretical importance and practical value, particularly when combined with deep learning methods for high-dimensional numerical solutions \cite{ref-33}.

Building on the extensive body of research in stochastic control theory, this paper focuses on the maximum principle for optimal control of fully coupled forward-backward stochastic difference equations (FBS$\Delta$Es). Our work extends the contributions of Niu et al. \cite{ref-34}, who investigated the solvability of nonlinear fully coupled FBS$\Delta$Es under Yu’s monotonicity framework. The novel contribution of this paper lies in the study of a new class of nonlinear systems where both the state process and the initial and terminal values are fully coupled, a feature not thoroughly explored in prior research. Under the assumption of a convex control domain, we derive the variational formula for the objective function using the Hamiltonian function and the adjoint system, establishing the necessary conditions for the maximum principle. Furthermore, by imposing appropriate conditions on the initial or terminal values, we prove the sufficiency of the maximum principle, thereby providing a comprehensive theoretical framework for optimal control in discrete-time systems.

It is important to acknowledge the pioneering contributions of  Ji and Liu \cite{ref-25}, who established the first comprehensive framework for the stochastic maximum principle in both partially and fully coupled forward-backward stochastic difference equations (FBS$\Delta$Es). Their seminal work delivers critical theoretical foundations via several key innovations: a unified analytical approach for partially coupled systems with tripartite solutions \((X,Y,Z)\) and fully coupled systems using extended quadrivariate solutions \((X,Y,Z,N)\) (where \(N\) denotes an orthogonal martingale process); well-posedness results (existence and uniqueness of solutions) under traditional monotonicity conditions to overcome limitations of continuous-time frameworks; novel discrete-time variational techniques to handle cross-term interactions in iterative systems and resolve significant computational challenges; and successful applications to practical domains including discrete nonlinear expectations, constrained option pricing, and time-consistent risk management. { However, our work differs significantly from theirs in several key aspects. First, the form of the backward equation in the fully coupled forward-backward stochastic difference equations is different: their solution consists of three components ($Y$, $Z$, $N$), whereas our solution comprises only one component. Second, methodologically, our research introduces two key innovations:
first, we employ a generalized monotonicity condition adapted and extended from Yu’s continuous-time framework to fit discrete-time fully coupled structures, which not only ensures the existence and uniqueness of solutions for nonlinear FB$\Delta$Es (without relying on restrictive traditional pointwise monotonicity assumptions for state variables) — but also features a flexible structure that naturally encompasses and enables the solution of diverse linear-quadratic (LQ) optimal control problems. Specifically, common discrete-time LQ scenarios (e.g., LQ control with initial/terminal state constraints, forward-backward coupled LQ systems, LQ problems with multiplicative stochastic perturbations, and LQ control for networked discrete systems) can all be subsumed under the scope of this condition, without the need to introduce additional ad-hoc assumptions for individual LQ cases. This unifies the solvability analysis of nonlinear FB$\Delta$Es and the solution of LQ control problems into a single theoretical framework: we first verify that the target LQ system satisfies the generalized monotonicity condition, then directly derive the optimal control strategy based on the system’s solvability result under this condition; second, we derive the variational inequality directly through the Hamiltonian function rather than perturbing the state equations directly. This approach leverages the convexity properties of the Hamiltonian function, simplifying the analysis and enhancing the applicability of our results to complex nonlinear control problems. Our methodology focuses on the Hamiltonian framework, while their approach is more traditional, relying on classical variational methods. These methodological distinctions set our work apart from previous studies that rely on classical variational methods and traditional monotonicity conditions.}
	
The remainder of the paper is organized as follows: Section 2 introduces the necessary notation and presents the optimal control system for the FBS$\Delta$Es under study, including lemmas on the existence and uniqueness of solutions. The adjoint equations and Hamiltonian system are also introduced as the foundation for the subsequent analysis.
Section 3 derives the variational formula for the objective function and establishes the necessary and sufficient conditions for the maximum principle.
Section~4 demonstrates the practical applicability of our theoretical results by reformulating a real-world energy storage scheduling problem as a linear-quadratic (LQ) optimal control problem governed by fully coupled FBS$\Delta$Es. The example illustrates how the discrete-time stochastic maximum principle can be applied to derive explicit control strategies under uncertainty.
Section 5 concludes the paper by summarizing the research findings and suggesting directions for future work.

This paper aims to contribute to the growing body of knowledge in stochastic control theory, offering new insights and tools for addressing complex discrete-time optimal control problems.

	\section{Notations and Preliminaries }\label{sec:2}
	Let $N$ be a given positive integer representing the time horizon.
	$\mathbb{T}$ , $\overline{\mathbb{T}}$  and  \b{${\mathbb T}$}   denote the sets  $\{0,1, \cdots, N-1\}$,  $\{0,1,2, \ldots, N\}$ and $\{1,2, \ldots, N\}$ respectively.
	Let $(\Omega, \mathcal{F}, \mathbb{F}, \mathbb{P})$ be a complete filtered probability space
	with a filtration $\mathbb{F}=\left\{\mathcal{F}_k : k=0, \cdots, N-1\right\}$. Assume that $\left\{w_k\right\}_{k\in \mathbb T}$ is a $\mathbb{R}$-value martingale difference sequence defined on a probability space $(\Omega, \mathcal{F}, P)$, i.e.
	$$
	\mathbb{E}\left[w_{k+1} \mid \mathcal{F}_k\right]=0, \quad \mathbb{E}\left[w_{k+1}^2 \mid \mathcal{F}_k\right]=1.
	$$
	In this paper, we assume that $\mathcal{F}_k$ is the $\sigma$-algebra generated by $\left\{x_0, w_l, l=\right.$ $0,1, \ldots, k\}$. For convenience, $\mathcal{F}_{-1}$ denotes $\left \{ \emptyset,\Omega  \right \}$.

	Let $\mathbb{R}^n$ be the $n$-dimensional Euclidean space with the norm $|\cdot|$ and the inner product $\langle\cdot,\cdot\rangle$. Let $\mathbb{S}^n$ be the set of all symmetric matrices in $\mathbb{R}^{n\times n}$. Let $\mathbb{R}^{n\times m}$ be the collection of all $n\times m$ matrices with the norm $|A|=\sqrt{\textrm{tr}(AA^\top)}$, for $\forall A\in \mathbb{R}^{n\times m}$ and the inner product:
	\begin{equation}
		\left\langle A,B\right\rangle = \textrm{tr}(AB^\top),\quad A,B\in \mathbb{R}^{n\times m}.\nonumber
	\end{equation}

	Let $\mathbb H$ be a Hilbert space with norm $\|\cdot\|_\mathbb H$, then we introduce some notations as follows:

	

	$\bullet$ $L^{2} _{\mathcal{F}_{N-1} }(\Omega ; \mathbb H)$: the set of all $\mathbb H$-valued  $\mathcal{F}_{N-1}$-measurable random variables $\xi$ satisfying
	
	\begin{equation}
		\|\xi\|_{L^{2} _{\mathcal{F}_{N-1} }(\Omega ;\mathbb{H})} :=\Big[\mathbb{E}\|\xi \|_\mathbb H^{2} \Big]^{\frac{1}{2}}<\infty .\nonumber
	\end{equation}
	
	$\bullet$ $L^{\infty} _{\mathcal{F}_{N-1} }(\Omega ;\mathbb H)$: the set of all $\mathbb H$-valued
	$\mathcal{F}_{N-1}$-measurable essentially bounded variables.

	$\bullet$ $L_{\mathbb{F}}^2\left(\mathbb{T}; \mathbb{H}\right)$: the set of all
	$\mathbb{H}$-valued  stochastic process $f(\cdot)=\{f_k|f_k$ is $\mathcal{F}_{k-1}$-measurable, k$\in \mathbb{T}\}$ satisfying
	
	\begin{equation}
		\|f(\cdot)\|_{L_{\mathbb{F}}^2\left(\mathbb{T}; \mathbb{H}\right)} :=\bigg [\mathbb{E}\bigg (\sum_{k=0}^{N -1} \|f_k\|_{\mathbb{H}}^{2}\bigg ) \bigg]^{\frac{1}{2}}<\infty.\nonumber
	\end{equation}
	
	$\bullet$ $L_{\mathbb{F}}^{\infty}(\mathbb{T}; \mathbb{H})$: the set of all
	$\mathbb{H}$-valued  essentially bounded stochastic processes $f(\cdot)=\{f_k|f_k$ is $\mathcal{F}_{k-1}$-measurable, k$\in \mathbb{T}\}$.
	
	$\bullet$ $\mathbb{U}$ : the  set of all admissible control
	$$
	\mathbb{U}:=\left\{\mathbf{u}=\left(u_0, u_1, \ldots, u_{N-1}\right) \mid u_k \text { is } \mathcal{F}_{k-1} \text {-measurable, } u_k \in U_k, \mathbb{E}\left[\sum_{k=0}^{N-1}\left|u_k\right|^2\right]<\infty\right\},
	$$
	where $	\{U_k\}_{k \in \mathbb{T}} $ is a sequence of nonempty convex subset of $\mathbb{R}^m$.
	
For the sake of simplicity of notation, we will also present some product space as follows:

$\bullet$ $N^{2}(\overline{\mathbb{T}};\mathbb{R}^{2n} ):= L_{\mathbb{F}}^2\left(\overline{\mathbb{T}}; \mathbb{R}^n\right)\times L_{\mathbb{F}}^2\left(\overline{\mathbb{T}}; \mathbb{R}^n\right)$ . For any $(x(\cdot ),y(\cdot ) ) \in N^{2}(\overline{\mathbb{T}} ;\mathbb{R}^{2n} )$, its norm is given by
\begin{equation}
	\begin{aligned}
		\|(x(\cdot),y(\cdot)) \|_{N^{2}(\overline{\mathbb{T}};\mathbb{R}^{2n} )}:= \left \{ \mathbb{E}\bigg [\displaystyle\sum_{k=0}^N|x(k)|^{2}+\displaystyle\sum_{k=0}^N|y(k)|^{2} \bigg ] \right \}^{\frac{1}{2}}.\nonumber
	\end{aligned}
\end{equation}

In this section, we introduce the controlled  fully coupled forward-backward stochastic difference equation (FBS$\Delta$E) system considered in this paper, along with the associated optimal control problem. This system is a discrete-time counterpart of continuous-time forward-backward stochastic differential equations (FBSDEs) and is widely used in stochastic control, mathematical finance, and other fields involving dynamic decision-making under uncertainty. Below, we provide a detailed description of the system and its components.   For  any admissible control \( u(\cdot) \in \mathbb{U} \), the system is described by the following equations:
\begin{equation}  \label{eq:2.1}
\left\{\begin{aligned}
&x_{k+1}  =b\left(k, x_k, y'_{k+1}, z'_{k+1}, u_k\right) + \sigma(k, x_k, y'_{k+1}, z'_{k+1}, u_k) \omega_k, \\
&y_k = -f\left(k+1, x_k, y'_{k+1}, z'_{k+1}, u_k\right), \\
&y'_{k+1} = \mathbb{E}[y_{k+1} | \mathcal{F}_{k-1}], \\
&z'_{k+1} = \mathbb{E}[y_{k+1} w_k | \mathcal{F}_{k-1}], \\
&x_0 = \Lambda(y_0), \\
&y_N = \Phi(x_N), \quad k \in \mathbb{T},
\end{aligned}\right.
\end{equation}
The system is fully coupled, meaning that the forward and backward processes \( x(\cdot) \) and \( y(\cdot) \) are interdependent. Specifically:The forward process \( x_{k+1} \) depends on the current state \( x_k \), the conditional expectations \( y'_{k+1} \) and \( z'_{k+1} \), and the control \( u_k \). The backward process \( y_k \) is determined by the forward state \( x_k \), the conditional expectations \( y'_{k+1} \) and \( z'_{k+1} \), and the control \( u_k \).

The performance of any admissible control \( u(\cdot) \in \mathbb{U} \) is evaluated using the following cost functional:

\begin{equation}  \label{eq:2.2}
J(u(\cdot)) = \mathbb{E}\left\{\varphi(x_N) + \gamma(y_0) + \sum_{k=0}^{N-1} l(k, x_k, y'_{k+1}, z'_{k+1}, u_k)\right\},
\end{equation}
where
 \( \varphi(x_N) \) is the terminal cost, depending on the final state \( x_N \).
 \( \gamma(y_0) \) is the initial cost, depending on the initial backward state \( y_0 \).
 \( l(k, x_k, y'_{k+1}, z'_{k+1}, u_k) \) is the running cost, depending on the current state \( x_k \), the conditional expectations \( y'_{k+1} \) and \( z'_{k+1} \), and the control \( u_k \).   Here $b, \sigma, f, l$ are given mappings such that: $ b: \Omega \times \mathbb{T}  \times \mathbb{R}^n \times \mathbb{R}^n\times \mathbb{R}^{n} \times \mathbb{R}^m\to \mathbb{R}^n,$
$ \sigma: \Omega \times \mathbb{T}  \times \mathbb{R}^n \times \mathbb{ R}^n\times \mathbb{R}^{n} \times \mathbb{R}^m\to \mathbb{R}^n,$
$f : \Omega \times \mathbb{T}  \times \mathbb{R}^n \times \mathbb{R}^{n} \times \mathbb{R}^n\times \mathbb{R}^m\to \mathbb{R}^n.$ For any $x ,y\in \mathbb{R}^n$, $\Phi (x),\varphi (x) \in L^{2} _{\mathcal{F}_{N-1} }(\Omega ; \mathbb {R}^n)$
and $\Lambda (y),\gamma(y)$ is deterministic.

The goal of the optimal control problem is to minimize the cost functional \( J(u(\cdot)) \) over all admissible control processes \( u(\cdot) \in \mathbb{U} \). Specifically, the optimal control problem for the fully coupled FBS$\Delta$E system is formulated as follows:

\begin{pro}\label{pro:2.1}
Find an admissible control \( u^*(\cdot) \in \mathbb{U} \) such that
\[
J(u^*(\cdot)) = \inf_{u(\cdot) \in \mathbb{U}} J(u(\cdot)).
\]
\end{pro}

Here, \( u^*(\cdot) \) is called the optimal control process, and the corresponding state process \( (x^*(\cdot), y^*(\cdot)) \) is referred to as the optimal state process. Together, the triplet \( (x^*(\cdot), y^*(\cdot), u^*(\cdot)) \) constitutes the optimal solution of the control problem.

To ensure that the optimal control problem is well-defined, we introduce the following assumptions. These assumptions provide the necessary regularity and structural conditions for the existence and uniqueness of solutions to the fully coupled FBS$\Delta$E system, as well as the well-posedness of the optimal control problem.

\begin{ass}\label{ass:2.3}
(i) For any \( x, y, z \in \mathbb{R}^n \), \( u \in U_k \), \( b(k, x, y, z, u) \) and \( \sigma(k, x, y, z, u) \) are \( \mathcal{F}_{k-1} \)-measurable, \( k \in \mathbb{T} \), and \( b(\cdot, 0, 0, 0, 0) \in L_{\mathbb{F}}^2(\mathbb{T}; \mathbb{R}^n) \), \( \sigma(\cdot, 0, 0, 0, 0) \in L_{\mathbb{F}}^2(\mathbb{T}; \mathbb{R}^n) \).

(ii) For any \( x, y', z' \in \mathbb{R}^n \), \( u \in U_k \), \( f(k+1, x, y', z', u) \) is \( \mathcal{F}_{k-1} \)-measurable, \( k \in \mathbb{T} \), and \( f(\cdot, 0, 0, 0, 0) \in L_{\mathbb{F}}^2(\mathbb{T}; \mathbb{R}^n) \).

(iii) For any \( (\omega, k) \in \Omega \times \mathbb{T} \), \( b, \sigma \), and \( f \) are differentiable with respect to \( (x, y, z, u) \). The corresponding derivatives \( \phi_a \), where \( \phi := b, \sigma, f \) and \( a := x, y, z, u \), are continuous and uniformly bounded.

(iv) For any \( (\omega, k) \in \Omega \times \mathbb{T} \), \( l \) is differentiable with respect to \( (x, y, z, u) \) with continuous derivatives \( l_a \), where \( a := x, y, z, u \). For any \( \omega \in \Omega \), \( g_1 \) and \( g_2 \) are differentiable with respect to \( x \) and \( y \) with continuous derivatives \( g_x \) and \( g_y \), where \( g_1 := \Phi, \varphi \), and \( g_2 := \Lambda, \gamma \). Moreover, for any \( (\omega, k) \in \Omega \times \mathbb{T} \), there exists a constant \( C > 0 \) such that for all \( (x, y, z, u) \in \mathbb{R}^n \times \mathbb{R}^n \times \mathbb{R}^n \times U_k \) and \( k \in \mathbb{T} \),
\[
|l(k, x, y, z, u)| \leq C(1 + |x|^2 + |y|^2 + |z|^2 + |u|^2),
\]
and
\[
|l_a(k, x, y, z, u)| \leq C(1 + |x| + |y| + |z| + |u|),
\]
where \( a := x, y, z, u \). Additionally, for any \( (\omega, k) \in \Omega \times \mathbb{T} \), there exists a constant \( C > 0 \) such that for any \( x, y \in \mathbb{R}^n \),
\[
|g_{1x}(x)| \leq C(1 + |x|), \quad |g_{2y}(y)| \leq C(1 + |y|),
\]
where \( g_1 := \Phi, \varphi \), and \( g_2 := \Lambda, \gamma \).
\end{ass}

\begin{ass}\label{ass:2.2}
There exist two constants \( \mu \ge 0 \), \( v \ge 0 \), a matrix-valued random variable \( G \in L_{\mathcal{F}_{N-1}}(\Omega; \mathbb{R}^{\tilde{m} \times n}) \), and a series of matrix-valued processes \( A(\cdot), B(\cdot), C(\cdot) \in L_{\mathbb{F}}^{\infty}(\mathbb{T}; \mathbb{R}^{m \times n}) \), such that the following conditions hold:

(i) One of the following two cases holds true:
 Case 1: \( \mu > 0 \) and \( v = 0 \);
 Case 2: \( \mu = 0 \) and \( v > 0 \).

(ii) (Domination condition) For all \( k \in \mathbb{T} \), almost all \( \omega \in \Omega \), \( u \in \mathbb{U} \), and any \( x, \bar{x}, y, \bar{y}, z, \bar{z} \in \mathbb{R}^n \),
\begin{equation}
\left\{\begin{aligned}
&|\Lambda(y) - \Lambda(\bar{y})| \le \frac{1}{\mu}|M\widehat{y}|, \\
&|\Phi(x) - \Phi(\bar{x})| \le \frac{1}{v}|G\widehat{x}|, \\
&|f(k+1, x, y, z, u) - f(k+1, \bar{x}, y, z, u)| \le \frac{1}{v}|A_k\widehat{x}|, \\
&|h(k, x, y, z, u) - h(k, x, \bar{y}, \bar{z}, u)| \le \frac{1}{\mu}|B_k\widehat{y} + C_k\widehat{z}|,
\end{aligned}\right.
\end{equation}
where \( h = b, \sigma \), and \( \widehat{x} = x - \bar{x} \), \( \widehat{y} = y - \bar{y} \), \( \widehat{z} = z - \bar{z} \).

(iii) (Monotonicity condition) For all \( k \in \mathbb{T} \), almost all \( \omega \in \Omega \), and any \( \theta = (x, y, z), \bar{\theta} = (\bar{x}, \bar{y}, \bar{z}) \in \mathbb{R}^{3n} \), one of the following two cases holds true:
\begin{equation}
\left\{\begin{aligned}
&\langle \Lambda(y) - \Lambda(\bar{y}), \widehat{y} \rangle \le -\mu|M\widehat{y}|^2, \\
&\langle \Phi(x) - \Phi(\bar{x}), \widehat{x} \rangle \ge v|G\widehat{x}|^2, \\
&\langle \Gamma(k, \theta, u) - \Gamma(k, \bar{\theta}, u), \widehat{\theta} \rangle \le -v|A_k\widehat{x}|^2 - \mu|B_k\widehat{y} + C_k\widehat{z}|^2,
\end{aligned}\right.
\end{equation}
or
\begin{equation}
\left\{\begin{aligned}
&\langle \Lambda(y) - \Lambda(\bar{y}), \widehat{y} \rangle \ge \mu|M\widehat{y}|^2, \\
&\langle \Phi(x) - \Phi(\bar{x}), \widehat{x} \rangle \le -v|G\widehat{x}|^2, \\
&\langle \Gamma(k, \theta, u) - \Gamma(k, \bar{\theta}, u), \widehat{\theta} \rangle \ge v|A_k\widehat{x}|^2 + \mu|B_k\widehat{y} + C_k\widehat{z}|^2,
\end{aligned}\right.
\end{equation}
where \( \Gamma(k, \theta, u) := (f(k+1, \theta, u), b(k, \theta, u), \sigma(k, \theta, u)) \), \( \langle \Gamma(k, \theta, u), \theta \rangle := \langle f(k+1, \theta, u), x \rangle + \langle b(k, \theta, u), y \rangle + \langle \sigma(k, \theta, u), z \rangle \), and \( \widehat{\theta} := \theta - \bar{\theta} \).
\end{ass}

Under the assumptions introduced above, we have the following key result regarding the existence, uniqueness, and a priori estimates for solutions to the fully coupled FBS$\Delta$E system. This result is based on Theorem 3.1 in \cite{ref-34}, which provides a general framework for analyzing such systems.

\begin{lem}\label{thm:2.3}
Let \( (\Lambda, \Phi, \Gamma) \) be a set of coefficients satisfying Assumptions \ref{ass:2.3} and \ref{ass:2.2}. Then, for any \( u(\cdot) \in \mathbb{U} \), the FBS$\Delta$E \eqref{eq:2.1} admits a unique solution \( (x(\cdot), y(\cdot)) \in N^{2}(\overline{\mathbb{T}}; \mathbb{R}^{2n}) \). Moreover, the following estimate holds:
\begin{equation}\label{eq:2.10}
\mathbb{E}\left[\sum_{k=0}^N |x_k|^2 + \sum_{k=0}^N |y_k|^2\right] \le K \mathbb{E}[\mathrm{I}],
\end{equation}
where
\[
\mathrm{I} = |\Phi(0)|^2 + \sum_{k=0}^{N-1} |b(k, 0, 0, 0, u_k)|^2 + \sum_{k=0}^{N-1} |\sigma(k, 0, 0, 0, u_k)|^2 + \sum_{k=0}^{N-1} |f(k+1, 0, 0, 0, u_k)|^2 + |\Lambda(0)|^2,
\]
and \( K \) is a positive constant depending only on \( \mathbb{T} \), the Lipschitz constants, \( \mu \), \( v \), and the bounds of \( G \), \( A(\cdot) \), \( B(\cdot) \), and \( C(\cdot) \).

Furthermore, let \( \bar{u}(\cdot) \in \mathbb{U} \) and \( (\bar{x}(\cdot), \bar{y}(\cdot)) \in N^{2}(\overline{\mathbb{T}}; \mathbb{R}^{2n}) \) be the corresponding solution to FBS$\Delta$E \eqref{eq:2.1}. Then the following estimate holds:
\begin{equation}\label{eq:2.8}
\mathbb{E}\left[\sum_{k=0}^N |\widehat{x}_k|^2 + \sum_{k=0}^N |\widehat{y}_k|^2\right] \le K \mathbb{E}[\widehat{\mathrm{I}}],
\end{equation}
where we denote :
	\begin{eqnarray}  \label{eq:2.9}
		\begin{aligned}	
			\widehat{\mathrm{I}}=&\sum_{k=0}^{N-1}|b(k,\bar{\theta}_k,u_k)-{b}(k,\bar{\theta}_k, \bar u_k)|^2+\sum_{k=0}^{N-1}|\sigma(k,\bar{\theta}_k, u_k)-{\sigma}(k,\bar{\theta}_k,\bar u_k)|^2
			\\&+\sum_{k=0}^{N-1}|f(k+1,\theta_k, u_k)-{f}(k+1,\bar{\theta}_k,\bar u_k)|^2,
		\end{aligned}
	\end{eqnarray}
		where $\theta_k =(x_k^\top ,y_{k+1 }'^\top,z_{k+1 }'^\top)^\top=(x_k^\top ,\mathbb{E} [y_{k+1 }|\mathcal{F}_{k-1}]^\top,\mathbb{E} [y_{k+1 }w_k|\mathcal{F}_{k-1}]^\top)^\top$, $
		\bar{\theta}_k =(\bar{x}_k^\top ,\bar{y}_{k+1 }'^\top,\bar{z}_{k+1 }'^\top)^\top$
		and $K$ is the same constant as in \eqref{eq:2.10}.
\end{lem}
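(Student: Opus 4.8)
The plan is to deduce the lemma from Theorem~3.1 of \cite{ref-34} by viewing the controlled system \eqref{eq:2.1}, for each \emph{fixed} admissible control, as an uncontrolled fully coupled FBS$\Delta$E of exactly the type treated there. First I would fix an arbitrary $u(\cdot)\in\mathbb{U}$ and freeze the control inside the coefficients, setting $\tilde b(k,x,y,z):=b(k,x,y,z,u_k)$, $\tilde\sigma(k,x,y,z):=\sigma(k,x,y,z,u_k)$ and $\tilde f(k+1,x,y,z):=f(k+1,x,y,z,u_k)$. Since each $u_k$ is $\mathcal{F}_{k-1}$-measurable and square-integrable, Assumption~\ref{ass:2.3} ensures the frozen coefficients retain the required $\mathcal{F}_{k-1}$-measurability, differentiability, and linear growth, with $\tilde b(\cdot,0,0,0),\tilde\sigma(\cdot,0,0,0),\tilde f(\cdot,0,0,0)\in L_{\mathbb{F}}^2(\mathbb{T};\mathbb{R}^n)$ following from the integrability of $u(\cdot)$ together with the uniform bound on the derivatives. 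The couplings $y'_{k+1}=\mathbb{E}[y_{k+1}\mid\mathcal{F}_{k-1}]$ and $z'_{k+1}=\mathbb{E}[y_{k+1}w_k\mid\mathcal{F}_{k-1}]$ are precisely the conditional projections playing the role of the $(Y,Z)$ pair in the martingale-difference setting, so the frozen system matches the template of \cite{ref-34} with no change of structure.

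The core verification step is to transfer Assumption~\ref{ass:2.2} to the frozen coefficients. Because the domination bounds in \ref{ass:2.2}(ii) are required to hold uniformly over $u\in\mathbb{U}$ and the monotonicity inequalities in \ref{ass:2.2}(iii) are stated for a common control argument, evaluating them at $u=u_k$ shows that $(\Lambda,\Phi,\Gamma(\cdot,\cdot,u_k))$ satisfies the generalized monotonicity and domination conditions of the framework adopted in \cite{ref-34}, with the \emph{same} constants $\mu,v$ and the \emph{same} matrices $G,A(\cdot),B(\cdot),C(\cdot)$, under whichever of Case~1 or Case~2 is in force. Invoking Theorem~3.1 of \cite{ref-34} then yields a unique solution $(x(\cdot),y(\cdot))\in N^{2}(\overline{\mathbb{T}};\mathbb{R}^{2n})$ together with the a priori estimate \eqref{eq:2.10}, whose right-hand data $\mathrm{I}$ is exactly the frozen-coefficient norm at the origin, now carrying the control through $b(k,0,0,0,u_k)$, $\sigma(k,0,0,0,u_k)$, $f(k+1,0,0,0,u_k)$ alongside $|\Phi(0)|^2$ and $|\Lambda(0)|^2$.

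For the stability estimate \eqref{eq:2.8} I would apply the companion continuous-dependence part of Theorem~3.1 of \cite{ref-34}. Freezing two controls $u(\cdot),\bar u(\cdot)$ produces two uncontrolled coefficient sets that share the same $\Lambda,\Phi$ and differ only through the control slot of $b,\sigma,f$; the increments $\widehat x=x-\bar x$, $\widehat y=y-\bar y$ therefore solve a fully coupled FBS$\Delta$E of the same monotone type whose only exogenous forcing is the $L^2$-discrepancy of the frozen coefficients evaluated along one of the two solutions. Since $\Lambda$ and $\Phi$ are control-independent, they contribute no exogenous data (their increments are governed by the same monotone domination), so the data entering the estimate reduces precisely to the coefficient differences assembled in $\widehat{\mathrm{I}}$ of \eqref{eq:2.9}, and the cited estimate then delivers \eqref{eq:2.8} with the same constant $K$.

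The step I expect to be the main obstacle is the faithful identification of the forcing term with $\widehat{\mathrm{I}}$ rather than a larger quantity: one must track exactly where each coefficient difference is evaluated (at $\bar\theta_k$ for $b,\sigma$ and, for $f$, in the mixed form appearing in \eqref{eq:2.9}) and confirm that, after absorbing the state-increment part into the monotone structure, no extra Gronwall-type overhead is incurred. A secondary technical point is the careful handling of the nested conditional expectations defining $y'_{k+1}$ and $z'_{k+1}$ under differencing, since the projection operators must commute with the increment and remain consistent with the $\mathcal{F}_{k-1}$-measurability imposed throughout; once these structural checks are settled, both estimates follow directly from the cited theorem.
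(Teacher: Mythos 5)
Your proposal takes essentially the same route as the paper: the paper offers no independent proof of Lemma~\ref{thm:2.3} and simply derives it from Theorem~3.1 of \cite{ref-34}, and your argument is precisely a detailed justification of that citation --- freezing each admissible control inside the coefficients, checking that Assumptions~\ref{ass:2.3} and~\ref{ass:2.2} transfer to the frozen system with the same constants, and then invoking the existence, a priori, and continuous-dependence estimates of the cited theorem. The verification steps you add (measurability of the frozen coefficients, uniformity of the domination/monotonicity conditions in $u$, and the identification of the forcing term with $\widehat{\mathrm{I}}$) are consistent with how the paper intends the lemma to be read.
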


The a priori estimates provided by Lemma \ref{thm:2.3} ensure that the performance index \( J(u(\cdot)) \) is well-defined. Specifically: The estimates \eqref{eq:2.10} and \eqref{eq:2.8} guarantee that the state processes \( x(\cdot) \) and \( y(\cdot) \) are square-integrable, which is necessary for the cost functional \( J(u(\cdot)) \) to be finite.  The stability estimate \eqref{eq:2.8} ensures that small perturbations in the control process lead to small changes in the state processes, which is essential for the continuity of the performance index with respect to the control. Therefore, the optimal control problem is well-posed.

In this paper, our goal is to establish  necessary and sufficient conditions for the stochastic maximum principle in the context of the optimal control problem for fully coupled forward-backward stochastic difference equations (FBS$\Delta$Es). The key idea is to use the Hamiltonian system to represent the variations of the cost
functional. To this end, we first introduce the adjoint equation associated with the state equation \eqref{eq:2.1}, which plays a central role in deriving the stochastic maximum principle. The adjoint equation, along with the Hamiltonian function, provides the necessary tools for analyzing the sensitivity of the cost functional to changes in the state and control processes. Below, we present the adjoint equation, define the Hamiltonian, and express the adjoint equation in terms of the Hamiltonian.

For ease of representation, we define the following shorthand notations for the coefficients of the state equation:
\[
\left\{\begin{array}{l}
b(k) := b\left(k, x_k, y_{k+1}^{\prime}, z_{k+1}^{\prime}, u_k\right), \\
\sigma(k) := \sigma\left(k, x_k, y_{k+1}^{\prime}, z_{k+1}^{\prime}, u_k\right), \\
f(k+1) := f\left(k+1, x_k, y_{k+1}^{\prime}, z_{k+1}^{\prime}, u_k\right), \\
l(k) := l\left(k, x_k, y_{k+1}^{\prime}, z_{k+1}^{\prime}, u_k\right).
\end{array}\right.
\]
The adjoint equation associated with the state equation \eqref{eq:2.1} is given by:
\begin{equation} \label{eq:2.16}
\left\{\begin{aligned}
r_{k+1} = & -\left[b_y^{\top}(k) p'_{k+1} + \sigma_y^{\top}(k) q'_{k+1} + f_y^{\top}(k+1) r_k + l_y(k)\right] \\
& -\left[b_z^{\top}(k) p'_{k+1} + \sigma_z^{\top}(k) q'_{k+1} + f_z^{\top}(k+1) r_k + l_z(k)\right] \omega_k, \\
p_k = & \quad b_x^{\top}(k) p'_{k+1} + \sigma_x^{\top}(k) q'_{k+1} + f_x^{\top}(k+1) r_k + l_x(k), \\
r_0 = & -\gamma_y\left(y_0\right) - \Lambda_y^{\top}\left(y_0\right) \cdot p_0, \\
p_N = & \quad \varphi_x\left(x_N\right) - \Phi_x^{\top}\left(x_N\right) \cdot r_N,
\end{aligned}\right.
\end{equation}
where:
 \( q_k := p_k \omega_{k-1} \),
 \( p'_{k+1} = \mathbb{E}[p_{k+1} | \mathcal{F}_{k-1}] \),
 \( q'_{k+1} = \mathbb{E}[p_{k+1} \omega_k | \mathcal{F}_{k-1}] \).

 The adjoint equation \eqref{eq:2.16} is a fully coupled backward stochastic difference equation (BS$\Delta$E), characterized by the mutual dependence between the adjoint processes \( (p_k, r_k) \). This interdependence means that the evolution of \( p_k \) is influenced by \( r_k \), and vice versa, reflecting the intrinsic coupling of the system. The solution to this equation is a pair of processes \( (p_k, r_k) \), which together encode the sensitivity of the cost functional to variations in the state processes \( (x_k, y_k) \). As a key component of the stochastic maximum principle, the adjoint equation \eqref{eq:2.16} establishes the necessary conditions for optimality by linking the state processes \( (x_k, y_k) \) with the adjoint processes \( (p_k, r_k) \). Specifically, the adjoint processes act as Lagrange multipliers, capturing the sensitivity of the cost functional to changes in the state variables. These processes are instrumental in deriving the optimal control through the Hamiltonian, which provides a unified framework for analyzing the interaction between the state, adjoint, and control processes.

To further analyze the optimal control problem, we introduce the Hamiltonian function \( H \), defined as:
\[
H: \Omega \times \mathbb{T} \times \mathbb{R}^n \times \mathbb{R}^n \times \mathbb{R}^n \times \mathbb{R}^m \times \mathbb{R}^n \times \mathbb{R}^n \times \mathbb{R}^n \to \mathbb{R},
\]
and given by:
\[
H(k, x, y, z, u, p, q, r) = b^\top(k, x, y, z, u) p + \sigma^\top(k, x, y, z, u) q + f^\top(k+1, x, y, z, u) r + l(k, x, y, z, u).
\]
Using the Hamiltonian, the adjoint equation \eqref{eq:2.16} can be expressed in a more compact form as:
\begin{equation} \label{eq:2.17}
\left\{\begin{aligned}
r_{k+1} = & -H_y(k) - H_z(k) \omega_k, \\
p_k = & \quad H_x(k), \\
r_0 = & -\gamma_y\left(y_0\right) - \Lambda_y^{\top}\left(y_0\right) \cdot p_0, \\
p_N = & \quad \varphi_x\left(x_N\right) - \Phi_x^{\top}\left(x_N\right) \cdot r_N,
\end{aligned}\right.
\end{equation}
where the partial derivatives of the Hamiltonian are defined as:
\[
\left\{
\begin{aligned}
H_x(k) &:= \frac{\partial H}{\partial x}(k, x_k, y_{k+1}^{\prime}, z_{k+1}^{\prime}, u_k, p_{k+1}^{\prime}, q_{k+1}^{\prime}, r_k), \\
H_y(k) &:= \frac{\partial H}{\partial y^{\prime}}(k, x_k, y_{k+1}^{\prime}, z_{k+1}^{\prime}, u_k, p_{k+1}^{\prime}, q_{k+1}^{\prime}, r_k), \\
H_z(k) &:= \frac{\partial H}{\partial z^{\prime}}(k, x_k, y_{k+1}^{\prime}, z_{k+1}^{\prime}, u_k, p_{k+1}^{\prime}, q_{k+1}^{\prime}, r_k), \\
H_u(k) &:= \frac{\partial H}{\partial u}(k, x_k, y_{k+1}^{\prime}, z_{k+1}^{\prime}, u_k, p_{k+1}^{\prime}, q_{k+1}^{\prime}, r_k).
\end{aligned}
\right.
\]

In the next section, we will use this coupled adjoint equation, along with the Hamiltonian, to derive the necessary and sufficient conditions for optimality, leading to the stochastic maximum principle for the fully coupled FBS$\Delta$E system.

	\section{Stochastic maximum principle for the coupled FBS$\bigtriangleup $E system} \label{sec:3}

In order to derive the stochastic maximum principle for Problem~\ref{pro:2.1}, we adopt a variational analysis framework based on spike perturbation techniques. The central idea is to perturb the control at a single time point and analyze the effect on the coupled forward-backward stochastic difference equation (FBS$\bigtriangleup$E) system and the associated cost functional.

Suppose that \( u^*(\cdot) \) is the optimal control of Problem \ref{pro:2.1}, and \( (x^*(\cdot), y^*(\cdot)) \) is the corresponding optimal trajectory. For a fixed time $ s \in \mathbb{T}$, choose any control \( u(\cdot) \in \mathbb{U} \) such that \(   u(\cdot)-u^*(\cdot) \in \mathbb{U} \). For any \( \varepsilon \in [0, 1] \), we construct the perturbed admissible control as follows:
\[
u^{\varepsilon}_k = (1 - \delta_{ks}) u^*_k + \delta_{ks} (u^*_k + \varepsilon (u_k-u^*_k)) = u^*_k + \delta_{ks} \varepsilon (u_k-u^*_k),
\qquad \qquad 0 \leq \varepsilon \leq 1, \, k \in \mathbb{T},
\]
where \( \delta_{ks} \) is the Kronecker delta function, defined by:
\[
\delta_{ks} =
\begin{cases}
1, & \text{if } k = s, \\
0, & \text{if } k \neq s.
\end{cases}
\]

Since \( \mathbb{U} \) is a convex set, the perturbed control \( \{u^{\varepsilon}_k\}_{k=0}^N \) remains admissible, i.e., \( u^{\varepsilon}(\cdot) \in \mathbb{U} \). Let \( (x^{\varepsilon}(\cdot), y^{\varepsilon}(\cdot)) \) be the solution of \eqref{eq:2.1} corresponding to the perturbed control \( u^{\varepsilon}(\cdot) \), and let \( (x^*(\cdot), y^*(\cdot)) \) be the solution of \eqref{eq:2.1} corresponding to the optimal control \( u^*(\cdot) \).

In what follows, we first present the estimates of    the perturbed state process \((x^{\varepsilon}(\cdot), y^{\varepsilon}(\cdot)) \).

\begin{lem}  \label{lem:3.1}
		Suppose that the conditions of Assumptions \ref{ass:2.3}  and \ref{ass:2.2}  are satisfied. Then \eqref{eq:2.1} yields the following estimate:

			\begin{equation}   \label{eq:3.2}
				\mathbb{E}\left[\sum_{k=0}^N\left|x_k^{\varepsilon}-x_k^*\right|^2\right]+\mathbb{E}\bigg[\displaystyle\sum_{k=0}^N |y^\varepsilon_{k}-y^*_{k}|^2\bigg]=O\left(\varepsilon^2\right)
			\end{equation}

\end{lem}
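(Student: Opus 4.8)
The plan is to reduce the claim to the a priori stability estimate \eqref{eq:2.8} of Lemma \ref{thm:2.3}, applied to the pair of admissible controls $u^*(\cdot)$ and $u^\varepsilon(\cdot)$. Taking $\bar{u}(\cdot) = u^*(\cdot)$ with corresponding solution $(\bar{x},\bar{y}) = (x^*,y^*)$, and $u(\cdot) = u^\varepsilon(\cdot)$ with corresponding solution $(x,y) = (x^\varepsilon,y^\varepsilon)$, the left-hand side of \eqref{eq:2.8} is precisely the quantity appearing in \eqref{eq:3.2}. Hence it suffices to show that the driving term satisfies $\mathbb{E}[\widehat{\mathrm{I}}] = O(\varepsilon^2)$, where $\widehat{\mathrm{I}}$ collects the discrepancies of $b,\sigma,f$ evaluated along the reference trajectory $\theta^*$ at the two controls (for the $f$ term this should be read along $\bar{\theta}_k$, so that all three contributions isolate the pure control effect).

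First I would exploit the localization of the spike perturbation. Since $u_k^\varepsilon - u_k^* = \delta_{ks}\varepsilon(u_k - u_k^*)$ vanishes for every $k \ne s$, each coefficient discrepancy in $\widehat{\mathrm{I}}$ is zero except at the single index $k=s$, so $\widehat{\mathrm{I}}$ collapses to three terms of the form $|\phi(s,\theta_s^*,u_s^\varepsilon) - \phi(s,\theta_s^*,u_s^*)|^2$ with $\phi \in \{b,\sigma,f\}$. Next I would invoke Assumption \ref{ass:2.3}(iii): $b,\sigma,f$ are differentiable in $u$ with uniformly bounded derivatives, hence globally Lipschitz in the control variable with a common constant $C$. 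The mean value theorem then yields $|\phi(s,\theta_s^*,u_s^\varepsilon) - \phi(s,\theta_s^*,u_s^*)| \le C|u_s^\varepsilon - u_s^*| = C\varepsilon|u_s - u_s^*|$. Summing the three contributions and taking expectations gives $\mathbb{E}[\widehat{\mathrm{I}}] \le 3C^2\varepsilon^2\,\mathbb{E}[|u_s - u_s^*|^2]$, and since $u(\cdot),u^*(\cdot) \in \mathbb{U}$ the factor $\mathbb{E}[|u_s - u_s^*|^2]$ is finite; thus $\mathbb{E}[\widehat{\mathrm{I}}] = O(\varepsilon^2)$, and \eqref{eq:2.8} delivers \eqref{eq:3.2}.

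The main obstacle is not the final computation, which is short once \eqref{eq:2.8} is in hand, but rather justifying that the stability estimate legitimately governs the fully coupled system. In a fully coupled FBS$\Delta$E one cannot propagate the perturbation by a naive forward Gronwall argument, because $x^\varepsilon_{k+1}$ depends on the future backward data through the conditional expectations $y'_{k+1}$ and $z'_{k+1}$, while $y^\varepsilon_k$ in turn depends on $x^\varepsilon_k$; this circular dependence of $x$ on $y$ and of $y$ on $x$ is exactly what the monotonicity and domination conditions of Assumption \ref{ass:2.2} are designed to control. It is precisely the prior establishment of \eqref{eq:2.8} under those conditions that lets us bound the solution difference solely by the control-induced discrepancy $\widehat{\mathrm{I}}$, thereby sidestepping any self-referential estimate in which the state differences would reappear on the right-hand side. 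Once this reduction is secured, the localization and the uniform Lipschitz bound in $u$ close the argument.
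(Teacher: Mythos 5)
Your proposal is correct and follows essentially the same route as the paper: both reduce the claim to the stability estimate \eqref{eq:2.8} of Lemma \ref{thm:2.3}, use the uniform boundedness of $b_u$, $\sigma_u$, $f_u$ from Assumption \ref{ass:2.3}(iii) to bound $\widehat{\mathrm{I}}$ by $C\,\mathbb{E}\big[\sum_{k=0}^{N-1}|u_k^\varepsilon-u_k^*|^2\big]$, and then exploit the spike structure $u_k^\varepsilon-u_k^*=\delta_{ks}\varepsilon(u_k-u_k^*)$ to collapse the sum to $C\varepsilon^2\,\mathbb{E}[|u_s-u_s^*|^2]=O(\varepsilon^2)$. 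Your additional remarks — reading the $f$-term of \eqref{eq:2.9} along $\bar{\theta}_k$ so that all three discrepancies isolate the pure control effect, and emphasizing that the full coupling is already absorbed into \eqref{eq:2.8} — are consistent with the paper's (terser) argument.
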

	
	\begin{proof}
By the estimate  \eqref{eq:2.9}	and the boundedness property of Fréchet derivatives $b_u$ , $\sigma_u$ and $f_u$, we have
		\begin{equation}
			\begin{aligned}
				\mathbb {E} \bigg[\sum_{k=0}^N\left|x_k^{\varepsilon}-x_k^*\right|^2 \bigg]+\mathbb{E}\bigg[\displaystyle\sum_{k=0}^N |y^\varepsilon_{k}-y^*_{k}|^2\bigg]
				 & \leqslant C \mathbb {E}\left[\sum_{k=0}^{N-1}|u_k^{\varepsilon}-u_k^*|^2\right] \\
				 &=C \mathbb {E}\left[\sum_{k=0}^{N-1}| u^*_k+\delta_{k s} \varepsilon (u_k-u^*_k)-u_k^*|^2\right] \\
				& =C \varepsilon^2  \mathbb {E}\bigg[|u_s-u^*_s|^2\bigg] \\
				& =O\left(\varepsilon^2\right). \nonumber
			\end{aligned}
		\end{equation}

	\end{proof}
	
	Next we represent the difference $J\left(u^\varepsilon(\cdot)\right)-J\left(u^*(\cdot)\right)$ in terms of the Hamiltonian $H$ and the state process $(x(\cdot), y(\cdot))$ as well as other relevant expressions.  To simplify our notation, we will also use the following abbreviations
	\begin{equation}
		\left\{\begin{aligned}
			\phi^\varepsilon(k)&:=\phi\left(k, x_k^\varepsilon, y_{k+1}^{\prime \varepsilon}, z_{k+1}^{\prime \varepsilon}, u_k^{\varepsilon}\right) ,\\
			\phi^*(k)&:=\phi\left(k, x_k^*, y_{k+1}^{\prime *}, z_{k+1}^{\prime *}, u_k^{*}\right) ,  \quad \phi := b, \sigma, f ,\\
			H^\varepsilon(k)&:=H\left(k, x_k^{\varepsilon}, y_{k+1}^{\prime \varepsilon}, z_{k+1}^{\prime \varepsilon}, u_k^\varepsilon, p_{k+1}^{\prime *}, q_{k+1}^{\prime *}, r_k^*\right),\\
			H^*(k)&:=H\left(k, x_k^*, y_{k+1}^{\prime *}, z_{k+1}^{\prime *}, u_k^*, p_{k+1}^{\prime *}, q_{k+1}^{\prime *}, r_k^*\right). \nonumber
		\end{aligned}\right.
	\end{equation}
	
	\begin{lem}  \label{lem:3.2}
		Suppose that the conditions of Assumptions \ref{ass:2.3} and \ref{ass:2.2} are satisfied.
		Then we have
			\begin{equation} \label{eq:3.4}
		\begin{aligned}
			&J\left(u^{\varepsilon}(\cdot)\right)-J\left(u^*(\cdot)\right)\\
			= \mathbb{E} \bigg\{
			&\sum_{k=0}^{N-1}\bigg[H^\varepsilon(k)-H^*(k)-\left\langle x_k^{\varepsilon}-x_k^*, H_x^*(k)\right\rangle
			-\left\langle y^{\prime \varepsilon} _{k+1}-y^{\prime *}_{k+1}, H_y^*(k)\right\rangle-\left\langle z_{k+1}^{\prime \varepsilon}-z_{k+1}^{\prime *}, H_z^*(k)\right\rangle\bigg]\\
			&+\varphi^{\varepsilon}\left(x_N^{\varepsilon}\right)-\varphi^*\left(x_N^*\right)-\langle x_N^\varepsilon-x_N^*,\varphi^*_x(x^*_N)  \rangle +\gamma^{\varepsilon}\left(y_0^\varepsilon\right)-\gamma^*\left(y_0^*\right)-\left\langle y_0^{\varepsilon}-y_0^*, \gamma^*_y\left(y_0^*\right)\right\rangle \\
			&+\left\langle\Phi_x^*\left(x_N^*\right)\left(x_N^{\varepsilon}-x_N^*\right)-\left(\Phi^{\varepsilon}\left(x_N^\varepsilon\right)-\Phi^*\left(x_N^*\right)\right),r_N^*\right\rangle
			+\left\langle\Lambda^{\varepsilon}\left(y_0^\varepsilon\right)-\Lambda^*\left(y_0^*\right)-\Lambda_y^*\left(y_0^*\right)\left(y_0^{\varepsilon}-y_0^*\right), p_0^*\right\rangle
			\bigg\}
		\end{aligned}
	\end{equation}
	\end{lem}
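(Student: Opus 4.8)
The plan is to begin directly from the definition of the cost functional in \eqref{eq:2.2}, writing
$$J(u^\varepsilon(\cdot))-J(u^*(\cdot))=\mathbb{E}\Big[\big(\varphi^\varepsilon(x_N^\varepsilon)-\varphi^*(x_N^*)\big)+\big(\gamma^\varepsilon(y_0^\varepsilon)-\gamma^*(y_0^*)\big)+\sum_{k=0}^{N-1}\big(l^\varepsilon(k)-l^*(k)\big)\Big].$$
The first step is to remove the running cost in favour of the Hamiltonian: since $H=b^\top p+\sigma^\top q+f^\top r+l$, evaluating $H$ along the perturbed and the optimal trajectories at the common adjoint arguments $(p_{k+1}^{\prime *},q_{k+1}^{\prime *},r_k^*)$ gives, termwise,
$$l^\varepsilon(k)-l^*(k)=H^\varepsilon(k)-H^*(k)-(b^\varepsilon(k)-b^*(k))^\top p_{k+1}^{\prime *}-(\sigma^\varepsilon(k)-\sigma^*(k))^\top q_{k+1}^{\prime *}-(f^\varepsilon(k+1)-f^*(k+1))^\top r_k^*.$$
This isolates the $H^\varepsilon-H^*$ contribution that must appear in \eqref{eq:3.4} and reduces the task to accounting for the three coefficient-difference terms.

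Writing $\widehat{x}_k=x_k^\varepsilon-x_k^*$ and $\widehat{y}_k=y_k^\varepsilon-y_k^*$, the next step is a discrete duality computation. The forward equation in \eqref{eq:2.1} gives $\widehat{x}_{k+1}=(b^\varepsilon(k)-b^*(k))+(\sigma^\varepsilon(k)-\sigma^*(k))\omega_k$, while the backward equation gives $f^\varepsilon(k+1)-f^*(k+1)=-\widehat{y}_k$. Because the coefficient differences are $\mathcal{F}_{k-1}$-measurable, whereas $p_{k+1}^{\prime *}=\mathbb{E}[p_{k+1}^*\mid\mathcal{F}_{k-1}]$ and $q_{k+1}^{\prime *}=\mathbb{E}[p_{k+1}^*\omega_k\mid\mathcal{F}_{k-1}]$, the tower property lets me pull the differences inside the conditional expectations and replace $p_{k+1}^{\prime *},q_{k+1}^{\prime *}$ by $p_{k+1}^*,p_{k+1}^*\omega_k$ under $\mathbb{E}$, so that
$$\mathbb{E}\big[(b^\varepsilon-b^*)^\top p_{k+1}^{\prime *}+(\sigma^\varepsilon-\sigma^*)^\top q_{k+1}^{\prime *}\big]=\mathbb{E}\big[\widehat{x}_{k+1}^\top p_{k+1}^*\big].$$
Together with the $f$-term, the three coefficient-difference terms then collapse into $\mathbb{E}[\sum_k\widehat{x}_{k+1}^\top p_{k+1}^*-\sum_k\widehat{y}_k^\top r_k^*]$, which enters $J(u^\varepsilon(\cdot))-J(u^*(\cdot))$ with an overall minus sign.

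The third step is a discrete summation by parts (index shift) driven by the adjoint system \eqref{eq:2.17}. For the forward part I reindex $\sum_k\widehat{x}_{k+1}^\top p_{k+1}^*=\sum_{k=1}^N\widehat{x}_k^\top p_k^*$ and substitute $p_k^*=H_x^*(k)$ for $k\le N-1$, producing $\sum_{k=0}^{N-1}\langle\widehat{x}_k,H_x^*(k)\rangle$ plus the boundary pieces $\widehat{x}_N^\top p_N^*-\widehat{x}_0^\top p_0^*$. For the backward part I use $\widehat{y}_{k+1}'=\mathbb{E}[\widehat{y}_{k+1}\mid\mathcal{F}_{k-1}]$ and $\widehat{z}_{k+1}'=\mathbb{E}[\widehat{y}_{k+1}\omega_k\mid\mathcal{F}_{k-1}]$ together with the dual reading of the recursion, $H_y^*(k)+\omega_k H_z^*(k)=-r_{k+1}^*$, to obtain
$$\sum_{k=0}^{N-1}\mathbb{E}\big[\langle\widehat{y}_{k+1}',H_y^*(k)\rangle+\langle\widehat{z}_{k+1}',H_z^*(k)\rangle\big]=-\sum_{k=1}^{N}\mathbb{E}\big[\widehat{y}_k^\top r_k^*\big].$$
This converts $\sum_{k=0}^{N-1}\widehat{y}_k^\top r_k^*$ into the required $H_y^*,H_z^*$ linearization terms plus the boundary pieces $\widehat{y}_0^\top r_0^*-\widehat{y}_N^\top r_N^*$.

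Finally I close the boundary terms using the adjoint data $p_N^*=\varphi_x^*(x_N^*)-\Phi_x^{*\top}(x_N^*)r_N^*$ and $r_0^*=-\gamma_y^*(y_0^*)-\Lambda_y^{*\top}(y_0^*)p_0^*$, together with the state boundary conditions $x_0=\Lambda(y_0)$ and $y_N=\Phi(x_N)$ that give $\widehat{x}_0=\Lambda^\varepsilon(y_0^\varepsilon)-\Lambda^*(y_0^*)$ and $\widehat{y}_N=\Phi^\varepsilon(x_N^\varepsilon)-\Phi^*(x_N^*)$. Substituting these reproduces exactly the two bracketed boundary expressions and the residual $-\langle\widehat{x}_N,\varphi_x^*(x_N^*)\rangle$, $-\langle\widehat{y}_0,\gamma_y^*(y_0^*)\rangle$ terms of \eqref{eq:3.4}, while $\varphi^\varepsilon-\varphi^*$, $\gamma^\varepsilon-\gamma^*$ and $\sum_k(H^\varepsilon-H^*)$ carry over unchanged. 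I expect the main obstacle to be the careful bookkeeping of the conditional expectations $y',z',p',q'$ and their $\mathcal{F}_{k-1}$-measurability: the identity hinges on commuting these conditional expectations past the $\mathcal{F}_{k-1}$-measurable adjoint data and on the $\omega_k$-weighting that ties $q'$ to $p^*$ and $\widehat{z}'$ to $H_z^*$ — the discrete-time surrogate of the It\^o-isometry cross terms — and the two index shifts must be aligned with these so that every boundary contribution lands precisely at $k=0$ or $k=N$.
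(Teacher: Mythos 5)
Your proposal is correct and follows essentially the same route as the paper's proof: rewriting the running cost via the Hamiltonian, performing the discrete summation by parts (telescoping) of $\langle \widehat{x}_k,p_k^*\rangle$ and $\langle \widehat{y}_k,r_k^*\rangle$ against the adjoint system, and closing the boundary terms with $p_N^*$, $r_0^*$, $x_0=\Lambda(y_0)$ and $y_N=\Phi(x_N)$. Your explicit tracking of the tower-property step that replaces $p_{k+1}^{\prime*},q_{k+1}^{\prime*}$ by $p_{k+1}^*,p_{k+1}^*\omega_k$ (and $\widehat{y}_{k+1},\widehat{y}_{k+1}\omega_k$ by $\widehat{y}_{k+1}^{\prime},\widehat{z}_{k+1}^{\prime}$) under the expectation is the same justification the paper invokes, only stated at the end of its proof rather than inline.
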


{
\begin{proof}
By direct substitution of the definitions of the cost functional $J(u(\cdot))$ (Eq. \eqref{eq:2.2}) and the Hamiltonian $H$, we derive the initial decomposition of the cost difference:
\begin{equation} \label{eq:3.5}
\begin{aligned}
J\left(u^{\varepsilon}(\cdot)\right)-J\left(u^*(\cdot)\right) = \mathbb{E} \bigg\{
&\sum_{k=0}^{N-1}\bigg[H^\varepsilon(k)-H^*(k)-\left\langle b^\varepsilon(k)-b^*(k), p_{k+1}^{\prime *}\right\rangle-\left\langle\sigma^\varepsilon(k)-\sigma^*(k), q_{k+1}^{\prime *}\right\rangle\\
&-\left\langle f^{\varepsilon}{(k+1)}-f^*(k+1), r_k^*\right\rangle\bigg]
+\varphi^{\varepsilon}\left(x_N^{\varepsilon}\right)-\varphi^*\left(x_N^*\right)+\gamma^{\varepsilon}\left(y_0^{\varepsilon}\right)-\gamma^*\left(y_0^*\right)
 \bigg\}.
\end{aligned}
\end{equation}

 Step 1: Telescoping sum for adjoint process $p_k.$
We first leverage the "telescoping sum property" for the adjoint process $p_k$ associated with the forward state $x_k$:
\begin{equation} \label{eq:3.6}
\begin{aligned}
& \sum_{k=0}^{N-1}\left[\left\langle x_{k+1}^{\varepsilon}-x_{k+1}^*, p_{k+1}^*\right\rangle-\left\langle x_k^{\varepsilon}-x_k^*, p_k^*\right\rangle\right] \\
= & \left\langle x_N^{\varepsilon}-x_N^*, p_N^*\right\rangle-\left\langle x_0^{\varepsilon}-x_0^*, p_0^*\right\rangle.
\end{aligned}
\end{equation}
Substitute the forward state dynamics $x_{k+1} = b(k) + \sigma(k)\omega_k$ into Eq. \eqref{eq:3.6} and use the definition of the Hamiltonian partial derivative $H_x^*(k) = p_k^*$ (from the adjoint equation):
\begin{equation} \label{eq:3.7}
\begin{aligned}
& \left\langle x_N^{\varepsilon}-x_N^*, p_N^*\right\rangle-\left\langle x_0^{\varepsilon}-x_0^*, p_0^*\right\rangle \\
= & \sum_{k=0}^{N-1}\left[\left\langle b^{\varepsilon}(k)-b^*(k)
+(\sigma^{\varepsilon}(k)-\sigma^*(k)) \omega_k, p_{k+1}^*\right\rangle-\left\langle x_k^{\varepsilon}-x_k^*, H_x^*(k)\right\rangle\right] \\
= & \sum_{k=0}^{N-1}\left[\left\langle b^{\varepsilon}(k)-b^*(k), p_{k+1}^*\right\rangle+\left\langle\sigma^{\varepsilon}(k)-\sigma^*(k), q_{k+1}^*\right\rangle-\left\langle x_k^{\varepsilon}-x_k^*, H_x^*(k)\right\rangle\right],
\end{aligned}
\end{equation}
where $q_{k+1}^* = \mathbb{E}[p_{k+1}^*\omega_k \mid \mathcal{F}_k]$ (conditional covariance of $p_{k+1}^*$ and $\omega_k$). Rearranging Eq. \eqref{eq:3.7} yields:
\begin{equation} \label{eq:3.8}
\begin{aligned}
& \sum_{k=0}^{N-1}\left[-\left\langle b^{\varepsilon}(k)-b^*(k), p_{k+1}^*\right\rangle -\left\langle\sigma^{\varepsilon}(k)-\sigma^*(k), q_{k+1}^*\right\rangle\right] \\
= & \sum_{k=0}^{N-1}\bigg[-\left\langle x_k^{\varepsilon}-x_k^*, H_x^*(k)\right\rangle\bigg]-\left\langle x_N^\varepsilon-x_N^*, p_N^*\right\rangle+\left\langle x_0^\varepsilon-x_0^*, p_0^*\right\rangle.
\end{aligned}
\end{equation}

 Step 2: Telescoping sum for adjoint process $r_k.$

We next analyze the telescoping sum for the adjoint process $r_k$ associated with the backward state $y_k$ (correcting the index error in the original sum):
\begin{equation} \label{eq:3.9}
\begin{aligned}
& \sum_{k=0}^{N-1}\left[\left\langle y_{k+1}^\varepsilon-y_{k+1}^*, r_{k+1}^*\right\rangle-\left\langle y_k^{\varepsilon}-y_k^*, r_k^*\right\rangle\right] \\
=& \left\langle y_N^{\varepsilon}-y_N^*, r_N^*\right\rangle-\left\langle y_0^{\varepsilon}-y_0^*, r_0^*\right\rangle.
\end{aligned}
\end{equation}
Substitute the backward state dynamics $y_k = -H_y^*(k) - H_z^*(k)\omega_k + f(k+1)$ into Eq. \eqref{eq:3.9}:
\begin{equation} \label{eq:3.10}
\begin{aligned}
& \left\langle y_N^{\varepsilon}-y_N^*, r_N^*\right\rangle-\left\langle y_0^{\varepsilon}-y_0^*, r_0^*\right\rangle \\
= & \sum_{k=0}^{N-1}\left[\left\langle y_{k+1}^\varepsilon-y_{k+1}^*,-H_y^*(k)-H_z^* (k)\omega_k\right\rangle+\left\langle f^\varepsilon(k+1)-f^*(k+1), r_k^*\right\rangle\right] \\
= & \sum_{k=0}^{N-1}\left[\left\langle y_{k+1}^\varepsilon-y_{k+1}^*,-H_y^*(k)\right\rangle+\langle z_{k+1}^\varepsilon-z_{k+1}^* ,  -H_z^*(k)\rangle    +\left\langle f^\varepsilon(k+1)-f^*(k+1), r_k^*\right\rangle\right].
\end{aligned}
\end{equation}
Rearranging Eq. \eqref{eq:3.10} to isolate the $f$-term gives:
\begin{equation} \label{eq:3.11}
\begin{aligned}
&\sum_{k=0}^{N-1}\left[-\left\langle f^{\varepsilon}(k+1)-f^*(k+1), r_k^*\right\rangle\right] \\
=&\sum_{k=0}^{N-1}\left[-\left\langle y^{\varepsilon}_{k+1}-y^*_{k+1}, H_y^*(k)\right\rangle-\left\langle z_{k+1}^{\varepsilon}-z_{k+1}^*, H_z^*(k)\right\rangle\right]-\left\langle y_N^\varepsilon-y_N^*, r_N^*\right\rangle+\left\langle y_0^{\varepsilon}-y_0^*, r_0^*\right\rangle.
\end{aligned}
\end{equation}

 Step 3: Combine results and substitute boundary conditions.

Substitute Eqs. \eqref{eq:3.8} and \eqref{eq:3.11} into the initial cost difference decomposition (Eq. \eqref{eq:3.5}), and incorporate the boundary conditions from the adjoint system: Terminal condition: $p_N^* = \varphi_x^*(x_N^*) - \Phi_x^{* \top}(x_N^*) r_N^*$ (linking $p_N^*$ to the terminal cost $\varphi$ and terminal mapping $\Phi$); Initial condition: $r_0^* = -\gamma_y^*(y_0^*) - \Lambda_y^{* \top}(y_0^*) p_0^*$ (linking $r_0^*$ to the initial cost $\gamma$ and initial mapping $\Lambda$);
 Initial state constraint: $x_0^\varepsilon - x_0^* = \Lambda^\varepsilon(y_0^\varepsilon) - \Lambda^*(y_0^*)$;
Terminal state constraint: $y_N^\varepsilon - y_N^* = \Phi^\varepsilon(x_N^\varepsilon) - \Phi^*(x_N^*)$.

After substituting these boundary conditions and rearranging terms, we obtain:
\begin{align}
&J\left(u^{\varepsilon}(\cdot)\right)-J\left(u^*(\cdot)\right) \notag\\
= \mathbb{E} \bigg\{ &\sum_{k=0}^{N - 1}\bigg[H^\varepsilon(k)-H^*(k)-\left\langle x_k^{\varepsilon}-x_k^*, H_x^*(k)\right\rangle -\left\langle y^{\varepsilon}_{k + 1}-y^{*}_{k + 1}, H_y^*(k)\right\rangle-\left\langle z_{k + 1}^{\varepsilon}-z_{k + 1}^*, H_z^*(k)\right\rangle\bigg] \notag\\
&+\varphi^{\varepsilon}\left(x_N^{\varepsilon}\right)-\varphi^*\left(x_N^*\right)+\gamma^{\varepsilon}\left(y_0^{\varepsilon}\right)-\gamma^*\left(y_0^*\right)-\left\langle x_N^\varepsilon-x_N^*, p_N^*\right\rangle+\left\langle x_0^\varepsilon-x_0^*,p_0^*\right\rangle -\left\langle y_N^\varepsilon-y_N^*, r_N^*\right\rangle+\left\langle y_0^{\varepsilon}-y_0^*, r_0^*\right\rangle \bigg\} \notag\\
= \mathbb{E} \bigg\{ &\sum_{k=0}^{N - 1}\bigg[H^\varepsilon(k)-H^*(k)-\left\langle x_k^{\varepsilon}-x_k^*, H_x^*(k)\right\rangle -\left\langle y^{\varepsilon}_{k + 1}-y^{*}_{k + 1}, H_y^*(k)\right\rangle-\left\langle z_{k + 1}^{\varepsilon}-z_{k + 1}^*, H_z^*(k)\right\rangle\bigg] \notag\\
&+\varphi^{\varepsilon}\left(x_N^{\varepsilon}\right)-\varphi^*\left(x_N^*\right)+\gamma^{\varepsilon}\left(y_0^{\varepsilon}\right)-\gamma^*\left(y_0^*\right)-\left\langle x_N^\varepsilon-x_N^*,\varphi^*_x(x^*_N)-\Phi_x^{*\top}\left(x^*_N\right) \cdot r^*_N\right\rangle +\left\langle \Lambda^\varepsilon (y^\varepsilon_0)-\Lambda^* (y^*_0),p_0^*\right\rangle \notag\\
&-\left\langle \Phi^{\varepsilon}\left(x_N^{\varepsilon}\right)-\Phi^*\left(x_N^*\right), r_N^*\right\rangle+\left\langle y_0^{\varepsilon}-y_0^*, -\gamma^*_y\left(y^*_0\right)-\Lambda_y^{* \top}\left(y^*_0\right) \cdot p^*_0\right\rangle \bigg\} \notag\\
= \mathbb{E} \bigg\{ &\sum_{k=0}^{N - 1}\bigg[H^\varepsilon(k)-H^*(k)-\left\langle x_k^{\varepsilon}-x_k^*, H_x^*(k)\right\rangle -\left\langle y^{\varepsilon} _{k + 1}-y^{*}_{k + 1}, H_y^*(k)\right\rangle-\left\langle z^{\varepsilon} _{k + 1}-z^{*}_{k + 1}, H_z^*(k)\right\rangle\bigg] \notag\\
&+\varphi^{\varepsilon}\left(x_N^{\varepsilon}\right)-\varphi^*\left(x_N^*\right)-\langle x_N^\varepsilon-x_N^*,\varphi^*_x(x^*_N) \rangle +\gamma^{\varepsilon}\left(y_0^\varepsilon\right)-\gamma^*\left(y_0^*\right)-\left\langle y_0^{\varepsilon}-y_0^*, \gamma^*_y\left(y_0^*\right)\right\rangle \notag\\
&+\left\langle\Phi_x^*\left(x_N^*\right)\left(x_N^{\varepsilon}-x_N^*\right)-\left(\Phi^{\varepsilon}\left(x_N^\varepsilon\right)-\Phi^*\left(x_N^*\right)\right),r_N^*\right\rangle +\left\langle\Lambda^{\varepsilon}\left(y_0^\varepsilon\right)-\Lambda^*\left(y_0^*\right)-\Lambda_y^*\left(y_0^*\right)\left(y_0^{\varepsilon}-y_0^*\right), p_0^*\right\rangle \bigg\}.
\end{align}

 Step 4: Replace states with conditional expectations.

Finally, we replace the random variables $y^{\varepsilon}_{k+1}, y^*_{k+1}, z^{\varepsilon}_{k+1}, z^*_{k+1}$ with their $\mathcal{F}_k$-conditional expectations:
\[
y^{\prime \varepsilon}_{k + 1} = \mathbb{E}[y^{\varepsilon}_{k + 1} \mid \mathcal{F}_k], \quad y^{\prime *}_{k + 1} = \mathbb{E}[y^{*}_{k + 1} \mid \mathcal{F}_k],
\]
\[
z^{\prime \varepsilon}_{k + 1} = \mathbb{E}[z^{\varepsilon}_{k + 1} \mid \mathcal{F}_k], \quad z^{\prime *}_{k + 1} = \mathbb{E}[z^{*}_{k + 1} \mid \mathcal{F}_k].
\]
This substitution is valid due to the measurability and linearity of conditional expectation: the Hamiltonian $H(k)$ is $\mathcal{F}_k$-measurable, so inner products involving $\mathcal{F}_k$-measurable random variables preserve their values when replaced by conditional expectations over $\mathcal{F}_k$.
The combination of the above steps yields Eq. \eqref{eq:3.4}, completing the proof.
\end{proof}}

	Now we are in the position to use Lemmas \ref{lem:3.1} and \ref{lem:3.2}  to derive the variational formula for the cost functional $J(u(\cdot))$ in
	terms of the Hamiltonian $H$.

	\begin{thm}  \label{thm:3.3}
		Under Assumptions \ref{ass:2.3} and \ref{ass:2.2}, the first-order variational formula for the cost functional $J(u(\cdot))$ can be expressed as
	\begin{equation}
		\begin{aligned}
			 \frac{d}{d \varepsilon} J(u^\varepsilon(\cdot))\bigg|_{\varepsilon=0}  :&=\lim _{\varepsilon \rightarrow 0^{+}} \frac{J(u^\varepsilon(\cdot))-J(u^*(\cdot))}{\varepsilon} \\
			& =\mathbb{E}\bigg[\langle H_u^*(s), u_s-u_s^*\rangle \bigg]
		\end{aligned}
	\end{equation}
	
	\end{thm}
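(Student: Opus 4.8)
The plan is to start from the exact difference identity \eqref{eq:3.4} of Lemma~\ref{lem:3.2}, divide both sides by $\varepsilon$, and pass to the limit $\varepsilon\to 0^{+}$. The right-hand side of \eqref{eq:3.4} splits naturally into three groups: (i) the running sum of Hamiltonian increments with the three first-order state inner products subtracted; (ii) the terminal and initial cost remainders $\varphi^{\varepsilon}(x_N^{\varepsilon})-\varphi^{*}(x_N^{*})-\langle x_N^{\varepsilon}-x_N^{*},\varphi_x^{*}\rangle$ and its $\gamma$-analogue; and (iii) the two remainder brackets paired with the adjoint values $r_N^{*}$ and $p_0^{*}$. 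I would show that groups (ii) and (iii) are $o(\varepsilon)$, so that only group (i) survives, producing $\varepsilon\,\mathbb{E}[\langle H_u^{*}(s),u_s-u_s^{*}\rangle]+o(\varepsilon)$.

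For group (i) I would expand each Hamiltonian increment in integral form along the segment joining the optimal and perturbed arguments, with the adjoint slots frozen at the starred values: writing $\Delta x_k=x_k^{\varepsilon}-x_k^{*}$, $\Delta y'_{k+1}=y'^{\varepsilon}_{k+1}-y'^{*}_{k+1}$, etc., the integrand after subtracting the three state terms becomes $\langle H_x(\lambda)-H_x^{*},\Delta x_k\rangle+\langle H_y(\lambda)-H_y^{*},\Delta y'_{k+1}\rangle+\langle H_z(\lambda)-H_z^{*},\Delta z'_{k+1}\rangle+\langle H_u(\lambda),\Delta u_k\rangle$, where $H_a(\lambda)$ is the derivative at the interpolated point. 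Since $u_k^{\varepsilon}-u_k^{*}=\delta_{ks}\varepsilon(u_s-u_s^{*})$, the last term contributes only at $k=s$ and equals $\varepsilon\int_0^1\langle H_u(\lambda)|_{k=s},u_s-u_s^{*}\rangle\,d\lambda$; dividing by $\varepsilon$ and letting $\varepsilon\to0$, continuity of $H_u$ together with $\Delta\to0$ gives $\langle H_u^{*}(s),u_s-u_s^{*}\rangle$ in the limit. The three derivative-difference terms I would bound by Cauchy--Schwarz: each is dominated by $\|\Delta x\|_{L^2}$ (respectively $\|\Delta y'\|_{L^2}$, $\|\Delta z'\|_{L^2}$) times the $L^2$-norm of the derivative difference, the former being $O(\varepsilon)$ by Lemma~\ref{lem:3.1} (and, for the primed quantities, by the conditional-Jensen contraction $\mathbb{E}|\Delta y'_{k+1}|^2\le\mathbb{E}|\Delta y_{k+1}|^2$ and $\mathbb{E}|\Delta z'_{k+1}|^2\le\mathbb{E}|\Delta y_{k+1}|^2$), while the latter tends to $0$; after dividing by $\varepsilon$ these vanish.

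For groups (ii) and (iii) I would note that each is an exact first-order Taylor remainder, which in integral form reads $\int_0^1\langle g_x(\cdot+\lambda\Delta)-g_x(\cdot),\Delta\rangle\,d\lambda$ for $g\in\{\varphi,\gamma,\Phi,\Lambda\}$, the $\Phi,\Lambda$ remainders being additionally paired with $r_N^{*}$ and $p_0^{*}$. Bounding $\mathbb{E}|R|/\varepsilon$ by $(\mathbb{E}|\Delta|^2/\varepsilon^2)^{1/2}$ times the $L^2$-norm of the derivative difference (times the adjoint factor in group (iii)), the first factor stays bounded by Lemma~\ref{lem:3.1}, and the second tends to $0$ by dominated convergence using the linear-growth bounds on $g_x$ from Assumption~\ref{ass:2.3}(iv); hence all of these terms are $o(\varepsilon)$ and drop out in the limit.

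The hard part will be justifying the interchange of limit with expectation and summation, i.e.\ establishing the $L^2$-convergence to zero of the various derivative-difference factors. This rests on three ingredients: the $O(\varepsilon^2)$ state estimate of Lemma~\ref{lem:3.1}, the uniform boundedness of $b_a,\sigma_a,f_a$ from Assumption~\ref{ass:2.3}(iii) (so that the $b,\sigma,f$ parts of $H_a$ are bounded), and the growth bounds on $l_a$ and $g_x$ from Assumption~\ref{ass:2.3}(iv) supplying integrable dominating functions. The most delicate points are the group (iii) terms, where the derivative difference is multiplied by an adjoint process: here one must combine the linear-growth bound on $\Phi_x,\Lambda_y$ with the integrability of the adjoint solution $(p^{*},r^{*})$ to produce a dominating function, after which the almost-sure convergence of the interpolated derivatives—a consequence of $\Delta\to0$ in $L^2$—closes the dominated-convergence argument and yields the claimed formula.
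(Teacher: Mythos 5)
Your proposal is correct and follows essentially the same route as the paper: start from the identity of Lemma \ref{lem:3.2}, write each increment as an integral first-order Taylor remainder, bound the derivative-difference terms by Cauchy--Schwarz together with the $O(\varepsilon^2)$ estimate of Lemma \ref{lem:3.1} (and the conditional-Jensen contraction for the primed quantities), and conclude by dominated convergence that everything except the $H_u$ contribution at $k=s$ is $o(\varepsilon)$. The only cosmetic difference is that the paper isolates the exact term $\varepsilon\,\mathbb{E}[\langle H_u^*(s),u_s-u_s^*\rangle]$ up front and shows the remainder $\beta^\varepsilon=o(\varepsilon)$, whereas you keep $\langle H_u(\lambda),\Delta u_k\rangle$ inside the integral and pass to the limit there; the two are equivalent.
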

	
	\begin{proof}
		For notational simplicity, we write
		\begin{equation}
			\begin{aligned}
				\beta ^\varepsilon= \mathbb{E} \bigg\{
				&\sum_{k=0}^{N-1}\bigg[H^\varepsilon(k)-H^*(k)-\left\langle x_k^{\varepsilon}-x_k^*, H_x^*(k)\right\rangle
				-\left\langle y^{\prime \varepsilon}_{k+1}-y^{\prime *}_{k+1}, H_y^*(k)\right\rangle
-\left\langle z_{k+1}^{\prime \varepsilon}-z_{k+1}^{\prime *}, H_z^*(k)\right\rangle-\left\langle u_{k}^{\varepsilon}-u_{k}^*, H_u^*(k)\right\rangle\bigg]\\
				&+\varphi^{\varepsilon}\left(x_N^{\varepsilon}\right)-\varphi^*\left(x_N^*\right)-\langle x_N^\varepsilon-x_N^*,\varphi^*_x(x^*_N)  \rangle +\gamma^{\varepsilon}\left(y_0^\varepsilon\right)-\gamma^*\left(y_0^*\right)-\left\langle y_0^{\varepsilon}-y_0^*, \gamma^*_y\left(y_0^*\right)\right\rangle \\
				&+\left\langle\Phi_x^*\left(x_N^*\right)\left(x_N^{\varepsilon}-x_N^*\right)-\left(\Phi^{\varepsilon}\left(x_N^\varepsilon\right)-\Phi^*\left(x_N^*\right)\right),r_N^*\right\rangle
				+\left\langle\Lambda^{\varepsilon}\left(y_0^\varepsilon\right)-\Lambda^*\left(y_0^*\right)-\Lambda_y^*\left(y_0^*\right)\left(y_0^{\varepsilon}-y_0^*\right), p_0^*\right\rangle
				\bigg\}
			\end{aligned}
		\end{equation}
By Lemma \ref{lem:3.2}, we have
		
		\begin{equation} \label{eq:3.13}
						\begin{aligned}
		J\left(u^{\varepsilon}(\cdot)\right)-J\left(u^*(\cdot)\right) &=
		\beta ^\varepsilon  +\mathbb{E}\left[\sum_{k=0}^{N-1}\left\langle H_u^*(k), u_k^\eps-u_k^*\right\rangle \right]\\
		&=\beta ^\varepsilon + \varepsilon \mathbb{E}\bigg[\langle H_u^*(s), u_s-u_s^*\rangle \bigg].
						\end{aligned}
		\end{equation}
		Under Assumptions \ref{ass:2.3}, combining the Taylor Expansions we can get
		
\begin{equation}  \label{eq:3.131}
	\begin{aligned}
		H^{\varepsilon}(k)-H^*(k)=  \int_0^1\bigg\{ &
		\langle x_k^{\varepsilon}-x_k^*, H_x^{\varepsilon, \lambda} \rangle  +\langle y_{k+1}^{\prime \varepsilon}-y_{k+1}^{\prime *}, H_y^{\varepsilon, \lambda} \rangle\\
		&+\langle z_{k+1}^{\prime \varepsilon}-z_{k+1}^{\prime *}, H_z^{\varepsilon, \lambda} \rangle+\langle u_k^{\varepsilon}-u_k^*, H_u^{\varepsilon, \lambda} \rangle\bigg\} d \lambda
	\end{aligned}
\end{equation}
where
		$$
	\left\{	\begin{aligned}
			& H^{\varepsilon, \lambda}(k):=H\bigg(k, x_k^{\varepsilon, \lambda},
			y_{k+1}^{\prime \varepsilon, \lambda}, z_{k+1}^{\prime \varepsilon, \lambda}, u_k^{\varepsilon, \lambda}\bigg),\\
			&x_k^{\varepsilon, \lambda}=x_k^*+\lambda\bigg(x_k^{\varepsilon}-x_k^*\bigg),\\
			&u_k^{\varepsilon, \lambda}=u_k^*+\lambda\bigg( u_k^{\varepsilon}-u_k^*\bigg),\\
			&y_{k+1}^{\prime \varepsilon, \lambda} :=
			\mathbb{E}\bigg[    y_{k+1}^*+\lambda(y_{k+1}^{\varepsilon}-y_{k+1}^*)| \mathcal{F}_{k-1} \bigg],\\
			&z_{k+1}^{\prime \varepsilon, \lambda} :=
			\mathbb{E}\bigg[    z_{k+1}^*+\lambda(z_{k+1}^{\varepsilon}-z_{k+1}^*)| \mathcal{F}_{k-1} \bigg].
		\end{aligned} \right.
		$$	
{ By using\eqref{eq:3.131},  the Cauchy - Schwarz inequality, Lemma \ref{lem:3.1}, and the dominated convergence theorem, we can obtain the following estimate:
{\small
\begin{eqnarray}
\begin{split}
	\mathbb{E}\bigg\{\sum_{k = 0}^{N - 1}&\left[ H^\varepsilon(k)-H^*(k)-\left\langle x_k^{\varepsilon}-x_k^*, H_x^*(k)\right\rangle -\left\langle y^{\prime \varepsilon}_{k + 1}-y^{\prime *}_{k + 1}, H_y^*(k)\right\rangle \right.\\
	&\left. -\left\langle z_{k + 1}^{\prime \varepsilon}-z_{k + 1}^{\prime *}, H_z^*(k)\right\rangle -\left\langle u_{k}^{\varepsilon}-u_{k}^*, H_u^*(k)\right\rangle \right]\bigg\}\\
	={}&\mathbb{E}\bigg\{\sum_{k = 0}^{N - 1}\int_0^1\bigg\{ \langle x_k^{\varepsilon}-x_k^*, H_x^{\varepsilon, \lambda} - H_x^*(k)\rangle  +\langle y_{k+1}^{\prime \varepsilon}-y_{k+1}^{\prime *}, H_y^{\varepsilon, \lambda}- H_y^*(k)\rangle\\
	&+\langle z_{k+1}^{\prime \varepsilon}-z_{k+1}^{\prime *}, H_z^{\varepsilon, \lambda}- H_z^*(k)\rangle+\langle u_k^{\varepsilon}-u_k^*, H_u^{\varepsilon, \lambda}- H_u^*(k)\rangle\bigg\} d \lambda\bigg\}\\
	\leq{}&\sum_{k = 0}^{N - 1}\int_0^1\left[\sqrt{\mathbb{E}\left[|x_k^{\varepsilon}-x_k^*|^2\right]}\sqrt{\mathbb{E}\left[|H_x^{\varepsilon, \lambda} - H_x^*(k)|^2\right]} + \sqrt{\mathbb{E}\left[|y_{k+1}^{\prime \varepsilon}-y_{k+1}^{\prime *}|^2\right]}\sqrt{\mathbb{E}\left[|H_y^{\varepsilon, \lambda}- H_y^*(k)|^2\right]}\right.\\
	&\left. + \sqrt{\mathbb{E}\left[|z_{k+1}^{\prime \varepsilon}-z_{k+1}^{\prime *}|^2\right]}\sqrt{\mathbb{E}\left[|H_z^{\varepsilon, \lambda}- H_z^*(k)|^2\right]} + \sqrt{\mathbb{E}\left[|u_k^{\varepsilon}-u_k^*|^2\right]}\sqrt{\mathbb{E}\left[|H_u^{\varepsilon, \lambda}- H_u^*(k)|^2\right]}\right] d \lambda \\
	\leq{}&\sum_{k = 0}^{N - 1}\int_0^1\left[\sqrt{\mathbb{E}\left[|x_k^{\varepsilon}-x_k^*|^2\right]}\sqrt{\mathbb{E}\left[|H_x^{\varepsilon, \lambda} - H_x^*(k)|^2\right]} + \sqrt{\mathbb{E}\left[|y_{k + 1}^{\varepsilon}-y_{k + 1}^{*}|^2\right]}\sqrt{\mathbb{E}\left[|H_y^{\varepsilon, \lambda}- H_y^*(k)|^2\right]}\right.\\
	&\left. + \sqrt{\mathbb{E}\left[|y_{k + 1}^{\varepsilon}-y_{k + 1}^{*}|^2\right]}\sqrt{\mathbb{E}\left[|H_z^{\varepsilon, \lambda}- H_z^*(k)|^2\right]} + \sqrt{\mathbb{E}\left[|u_k^{\varepsilon}-u_k^*|^2\right]}\sqrt{\mathbb{E}\left[|H_u^{\varepsilon, \lambda}- H_u^*(k)|^2\right]}\right] d \lambda\\
	\leq{}&\sqrt{\sum_{k = 0}^{N - 1}\mathbb{E}\left[|x_k^{\varepsilon}-x_k^*|^2\right]}\int_0^1\sqrt{\sum_{k = 0}^{N - 1}\mathbb{E}\left[|H_x^{\varepsilon, \lambda} - H_x^*(k)|^2\right]}d\lambda \\
	&+ \sqrt{\sum_{k = 0}^{N - 1}\mathbb{E}\left[|y_{k + 1}^{\varepsilon}-y_{k + 1}^{*}|^2\right]}\int_0^1\left(\sqrt{\sum_{k = 0}^{N - 1}\mathbb{E}\left[|H_y^{\varepsilon, \lambda}- H_y^*(k)|^2\right]}+\sqrt{\sum_{k = 0}^{N - 1}\mathbb{E}\left[|H_z^{\varepsilon, \lambda}- H_z^*(k)|^2\right]}\right)d\lambda\\
	&+\sqrt{\sum_{k = 0}^{N - 1}\mathbb{E}\left[|u_k^{\varepsilon}-u_k^*|^2\right]}\int_0^1\sqrt{\sum_{k = 0}^{N - 1}\mathbb{E}\left[|H_u^{\varepsilon, \lambda}- H_u^*(k)|^2\right]}d\lambda\\
	={}&o(\varepsilon)
\end{split}
\end{eqnarray}}

		Similarly, it can be shown that
\begin{align}
\mathbb{E}\left[\sum_{k = 0}^{N - 1}\left(\varphi^{\varepsilon}\left(x_N^{\varepsilon}\right)-\varphi^*\left(x_N^*\right)-\langle x_N^\varepsilon - x_N^*,\varphi^*_x(x^*_N)\rangle\right)\right]&= o(\varepsilon)\\
\mathbb{E}\left[\sum_{k = 0}^{N - 1}\left(\gamma^{\varepsilon}\left(y_0^\varepsilon\right)-\gamma^*\left(y_0^*\right)-\langle y_0^{\varepsilon}-y_0^*, \gamma^*_y\left(y_0^*\right)\rangle\right)\right]&= o(\varepsilon)\\
\mathbb{E}\left[\sum_{k = 0}^{N - 1}\left(\Phi_x^*\left(x_N^*\right)\left(x_N^{\varepsilon}-x_N^*\right)-\left(\Phi^{\varepsilon}\left(x_N^\varepsilon\right)-\Phi^*\left(x_N^*\right)\right)\right)\right]&= o(\varepsilon)\\
\mathbb{E}\left[\sum_{k = 0}^{N - 1}\left(\Lambda^{\varepsilon}\left(y_0^\varepsilon\right)-\Lambda^*\left(y_0^*\right)-\Lambda_y^*\left(y_0^*\right)\left(y_0^{\varepsilon}-y_0^*\right)\right)\right]&= o(\varepsilon)
\end{align}
Combining all the estimates above, we thus obtain

		\begin{equation}  \label{eq:4.18}
			\beta^\varepsilon=o(\varepsilon).
		\end{equation}
Plugging \eqref{eq:4.18} into \eqref{eq:3.13} gives
		
			\begin{equation}
				\frac{d}{d \varepsilon} J(u^\varepsilon(\cdot))\bigg|_{\varepsilon=0} =\mathbb{E}\bigg[\langle H_u^*(s), u_s-u_s^*\rangle \bigg].
		\end{equation}}
This completes the proof.
		
			\end{proof}
		
		Now we derive the necessary condition and sufficient maximum principles for  Problem \ref{pro:2.1}. We first give the necessary condition of optimality for the existence of an optimal control.

		\begin{thm} \label{thm:3.4}
		(Necessary Stochastic Maximum principle). 		Suppose that the conditions of Assumption \ref{ass:2.3}-\ref{ass:2.2} are satisfied.
		Let $u^*(\cdot)$ be the optimal conrol and let $(x^*(\cdot),y^*(\cdot))$ be the corresponding trajectory of Problem \ref{pro:2.1}.  Then, for any \( k \in \mathbb{T} \) and any \( u \in U_k \), the following inequality holds:

\begin{equation} \label{eq:3.16}
\left\langle H_u^*(k), u-u_k^*\right\rangle \geq 0, \quad \mathbb{P}\text{-a.s.}.
\end{equation}

	\end{thm}

	\begin{proof}
In accordance with Theorem \ref{thm:3.3} and the optimality of $u^*(\cdot)$, we derive  that
\begin{equation}
    \mathbb{E}\bigg[\left\langle H_u^*(s), u_s - u_s^* \right\rangle \bigg]
    = \lim_{\varepsilon \rightarrow 0^{+}} \frac{J\left(u^\varepsilon(\cdot)\right) - J\left(u^*(\cdot)\right)}{\varepsilon} \geq 0.
\end{equation}
To prove \eqref{eq:3.16}, suppose, for contradiction, that there exists a \( k \in \mathbb{T} \) and a \( u_0 \in U_k \) such that
\begin{equation}
    \left\langle H_u^*(k), u_0 - u_k^* \right\rangle < 0.
\end{equation}
Define the set
\begin{equation}
    K := \left\{ (k, \omega) \in \mathbb{T} \times \Omega \mid \left\langle H_u^*(k), u_0 - u_k^* \right\rangle < 0 \right\},
\end{equation}
and for each \( k \in \mathbb{T} \), let
\begin{equation}
    K_k := \left\{ \omega \in \Omega \mid (k, \omega) \in K \right\}.
\end{equation}
Construct an admissible control \( u(\cdot) \) as follows:
\begin{equation}
    u(k) =
    \begin{cases}
        u_0, & (k, \omega) \in K, \\
        u_k^*, & \text{otherwise}.
    \end{cases}
\end{equation}
Then, we have
\begin{equation}
    \mathbb{E}\bigg[\left\langle H_u^*(k), u_k - u_k^* \right\rangle \bigg] = \mathbb{E}\bigg[\left\langle H_u^*(k), u_0 - u_k^* \right\rangle \mathbb{I}_{K_k} \bigg] < 0,
\end{equation}
where $ \mathbb{I}_{K_k} $ is the indicator function of the set \( K_k \). This contradicts the optimality of \( u^*(\cdot) \), as it implies that \( u(\cdot) \) yields a lower cost than \( u^*(\cdot) \). Therefore, the inequality \eqref{eq:3.16} must hold. This completes the proof.
	\end{proof}
	Next we give the sufficient condition of optimality for the existence of an optimal control of Problem \ref{pro:2.1}.
			\begin{thm} \label{thm:3.5}
		(Sufficient Maximum Principle). Suppose that the conditions of Assumption \ref{ass:2.3}-\ref{ass:2.2} are satisfied.
		Let $u^*(\cdot)$ be the optimal conrol and let $(x^*(\cdot),y^*(\cdot))$ be the corresponding trajectory of Problem \ref{pro:2.1}.
		If the following conditions are satisfied,
		
		\begin{enumerate}
    \item[(i)] One of the functions \( \Phi(\cdot) \) and \( \Lambda(\cdot) \) is linear and the other is a constant, i.e., \( \Phi(x) = kx \), \( \Lambda(\cdot) = a \) or \( \Phi(\cdot) = a \), \( \Lambda(y) = ky \),

    \item[(ii)] The function \( g(\cdot) \) is convex in \( (x, y) \), where \( g := \varphi, \gamma \),

    \item[(iii)] The Hamiltonian function \( H \) is convex in \( (x, y, z, u) \),

    \item[(iv)] The Hamiltonian function \( H \) satisfies
    \[
    H\left(k, x^*, y^{\prime *}, z^{\prime *}, u^*, p^{\prime *}, q^{\prime *}, r^*\right)
    = \min _{u(\cdot) \in \mathbb{U}} H\left(k, x^*, y^{\prime *}, z^{\prime *}, u, p^{\prime *}, q^{\prime *}, r^*\right), \quad \text{a.e. a.s.},
    \]
\end{enumerate}
then \( (x^*(\cdot), y^*(\cdot), u^*(\cdot)) \) is the optimal solution of Problem \ref{pro:2.1}.
			\end{thm}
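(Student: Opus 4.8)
The plan is to establish optimality directly by showing $J(u(\cdot)) - J(u^*(\cdot)) \ge 0$ for \emph{every} admissible $u(\cdot) \in \mathbb{U}$, rather than through a limiting argument. The crucial observation is that the identity \eqref{eq:3.4} of Lemma~\ref{lem:3.2} is purely algebraic: it was obtained by summation-by-parts against the optimal adjoint processes $(p^*, r^*)$ together with the boundary conditions of \eqref{eq:2.16}, and at no point did the derivation use that the competing control was the spike perturbation $u^\varepsilon$. Hence the same formula holds verbatim with $u^\varepsilon(\cdot)$ replaced by an arbitrary $u(\cdot)\in\mathbb{U}$ and $(x^\varepsilon,y^\varepsilon)$ by the corresponding trajectory $(x,y)$; I would first record this and then argue that each of the three groups of terms on the right-hand side is nonnegative.

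For the running part I would invoke the convexity of $H$ in $(x,y,z,u)$ from condition~(iii), noting that in both $H(k)$ and $H^*(k)$ the adjoint arguments $(p^{\prime *}_{k+1}, q^{\prime *}_{k+1}, r_k^*)$ are identical, so the gradient inequality applies in the $(x,y,z,u)$ slots with the optimal adjoint held fixed:
\[
H(k)-H^*(k) - \langle x_k - x_k^*, H_x^*(k)\rangle - \langle y^{\prime}_{k+1}-y^{\prime *}_{k+1}, H_y^*(k)\rangle - \langle z^{\prime}_{k+1}-z^{\prime *}_{k+1}, H_z^*(k)\rangle \ge \langle u_k - u_k^*, H_u^*(k)\rangle .
\]
It then remains to show the right-hand side has nonnegative expectation. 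Condition~(iv) states that $u_k^*$ minimizes $u\mapsto H(k, x^*, y^{\prime *}, z^{\prime *}, u, p^{\prime *}, q^{\prime *}, r^*)$ over $U_k$; since $U_k$ is convex and $H$ is differentiable in $u$, differentiating along the admissible segment $u_k^* + \lambda(u_k - u_k^*)$ at $\lambda=0^+$ gives $\langle H_u^*(k), u_k - u_k^*\rangle \ge 0$ a.s. for the admissible $u_k\in U_k$, so the entire Hamiltonian sum is nonnegative. The two cost-boundary terms $\varphi(x_N) - \varphi^*(x_N^*) - \langle x_N - x_N^*, \varphi_x^*(x_N^*)\rangle$ and $\gamma(y_0) - \gamma^*(y_0^*) - \langle y_0 - y_0^*, \gamma_y^*(y_0^*)\rangle$ are precisely the gradient-inequality gaps of the convex functions $\varphi$ and $\gamma$ from condition~(ii), hence each is nonnegative pointwise.

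The delicate part, and the reason for the structural hypothesis~(i), is the pair of boundary terms
\[
\langle \Phi_x^*(x_N^*)(x_N - x_N^*) - (\Phi(x_N) - \Phi^*(x_N^*)), r_N^*\rangle + \langle \Lambda(y_0) - \Lambda^*(y_0^*) - \Lambda_y^*(y_0^*)(y_0 - y_0^*), p_0^*\rangle ,
\]
which are inner products of convexity-type gaps against the adjoint variables $r_N^*$ and $p_0^*$. Because $r_N^*$ and $p_0^*$ carry no definite sign, these terms cannot be controlled by convexity alone, so the remedy is to force the gaps to vanish. Under condition~(i), if $\Phi(x)=Kx$ is linear then $\Phi(x_N)-\Phi^*(x_N^*)=K(x_N-x_N^*)=\Phi_x^*(x_N^*)(x_N-x_N^*)$, killing the first gap, while the companion constant $\Lambda\equiv a$ gives both $\Lambda(y_0)-\Lambda^*(y_0^*)=0$ and $\Lambda_y^*=0$, killing the second; the symmetric case $\Phi\equiv a$, $\Lambda(y)=Ky$ is handled identically. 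Thus both boundary terms vanish. Combining the three estimates with \eqref{eq:3.4} yields $J(u(\cdot)) - J(u^*(\cdot)) \ge 0$ for all admissible $u(\cdot)$, i.e. $u^*(\cdot)$ is optimal. I expect the treatment of the $\Phi$/$\Lambda$ boundary terms to be the main obstacle, and condition~(i) to be essential precisely because for genuinely nonlinear coupled boundary data the sign of $\langle(\text{convexity gap}),r_N^*\rangle$ is uncontrollable.
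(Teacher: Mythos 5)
Your proposal is correct and follows essentially the same route as the paper: both apply the algebraic identity of Lemma~\ref{lem:3.2} to an arbitrary admissible control, use convexity of $H$, $\varphi$, $\gamma$ together with the first-order minimality condition to control the running and cost-boundary terms, and invoke condition~(i) to annihilate the $\Phi$/$\Lambda$ boundary gaps paired with $r_N^*$ and $p_0^*$. If anything, you are more explicit than the paper on two points it glosses over --- that the identity of Lemma~\ref{lem:3.2} holds verbatim for any admissible competitor (the paper's displayed formula still carries $\varepsilon$ superscripts) and the derivation of $\langle H_u^*(k),u_k-u_k^*\rangle\ge 0$ from (iv) by differentiating along the admissible segment.
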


\begin{proof}

\subsection*{Step 1: Variational Representation}
For an arbitrary admissible control \( u(\cdot) \) and its corresponding state trajectory \( (x(\cdot), y(\cdot)) \), consider the cost difference expression derived from Lemma \ref{lem:3.2}:
\[
J(u(\cdot)) - J(u^*(\cdot)) = \mathbb{E} \left\{ \Sigma_1 + \Sigma_2 + \Sigma_3 \right\},
\]
where \( \Sigma_1, \Sigma_2, \Sigma_3 \) are defined explicitly as:
\[
\begin{aligned}
\Sigma_1 &= \sum_{k=0}^{N-1} \bigg[ H^\varepsilon(k) - H^*(k) - \langle x_k^{\varepsilon} - x_k^*, H_x^*(k) \rangle - \langle y^{\prime \varepsilon}_{k+1} - y^{\prime *}_{k+1}, H_y^*(k) \rangle - \langle z_{k+1}^{\prime \varepsilon} - z_{k+1}^{\prime *}, H_z^*(k) \rangle \bigg], \\
\Sigma_2 &= \bigg[ \varphi^{\varepsilon}(x_N^{\varepsilon}) - \varphi^*(x_N^*) - \langle x_N^\varepsilon - x_N^*, \varphi^*_x(x^*_N) \rangle \bigg] + \bigg[ \gamma^{\varepsilon}(y_0^\varepsilon) - \gamma^*(y_0^*) - \langle y_0^{\varepsilon} - y_0^*, \gamma^*_y(y_0^*) \rangle \bigg], \\
\Sigma_3 &= \bigg\langle \Phi_x^*(x_N^*) (x_N^{\varepsilon} - x_N^*) - (\Phi^{\varepsilon}(x_N^\varepsilon) - \Phi^*(x_N^*)), r_N^* \bigg\rangle + \bigg\langle \Lambda^{\varepsilon}(y_0^\varepsilon) - \Lambda^*(y_0^*) - \Lambda_y^*(y_0^*)(y_0^{\varepsilon} - y_0^*), p_0^* \bigg\rangle.
\end{aligned}
\]

\subsection*{Step 2: Hamiltonian Convexity Exploitation}
By condition (iii) (Hamiltonian convexity), for each \( k \in \mathbb{T} \), the convexity of \( H \) implies:
\[
H^\varepsilon(k) - H^*(k) \geq \nabla H^*(k) \cdot (\theta^\varepsilon(k) - \theta^*(k)),
\]
where \( \theta = (x, y', z', u) \) denotes the aggregate variable vector, and \( \nabla H = (H_x, H_y, H_z, H_u) \) denotes the gradient of \( H \) with respect to \( \theta \). Expanding the inner product explicitly, this inequality becomes:
\[
\begin{aligned}
H^\varepsilon(k) - H^*(k) &\geq \langle x_k^\varepsilon - x_k^*, H_x^*(k) \rangle + \langle y^{\prime\varepsilon}_{k+1} - y^{\prime*}_{k+1}, H_y^*(k) \rangle \\
&\quad + \langle z_{k+1}^{\prime\varepsilon} - z_{k+1}^{\prime*}, H_z^*(k) \rangle + \langle u_k^\varepsilon - u_k^*, H_u^*(k) \rangle.
\end{aligned}
\]
Substituting this convexity-induced inequality into the expression for \( \Sigma_1 \), we eliminate the cross terms involving \( x, y', z' \) and obtain:
\[
\Sigma_1 \geq \sum_{k=0}^{N-1} \langle u_k^\varepsilon - u_k^*, H_u^*(k) \rangle.
\]

\subsection*{Step 3: Terminal and Initial Conditions Treatment}
We analyze \( \Sigma_2 \) and \( \Sigma_3 \) using the convexity of boundary functions and linearity conditions: For \( \Sigma_2 \): By condition (ii) and the convexity of \( \varphi \) (terminal cost function) and \( \gamma \) (initial cost function), the following inequalities hold:
  \[
  \varphi^{\varepsilon}(x_N^{\varepsilon}) - \varphi^*(x_N^*) \geq \langle x_N^\varepsilon - x_N^*, \varphi_x^*(x_N^*) \rangle,
  \]
  \[
  \gamma^{\varepsilon}(y_0^\varepsilon) - \gamma^*(y_0^*) \geq \langle y_0^{\varepsilon} - y_0^*, \gamma_y^*(y_0^*) \rangle.
  \]
  Substituting these into \( \Sigma_2 \) directly implies \( \Sigma_2 \geq 0 \).

 For \( \Sigma_3 \): By condition (i) (linearity of \( \Phi \) and constancy of \( \Lambda \)), we have two key simplifications:
  1. If \( \Lambda(\cdot) = a \) (constant), then \( \Lambda^{\varepsilon}(y_0^\varepsilon) - \Lambda^*(y_0^*) = 0 \);
  2. If \( \Phi(x) = kx \) (linear), then the first term in \( \Sigma_3 \) vanishes:
     \[
     \Phi_x^*(x_N^*) (x_N^{\varepsilon} - x_N^*) - (\Phi^{\varepsilon}(x_N^\varepsilon) - \Phi^*(x_N^*)) = k(x_N^\varepsilon - x_N^*) - (kx_N^\varepsilon - kx_N^*) = 0.
     \]
  Together, these imply \( \Sigma_3 = 0 \).

\subsection*{Step 4: Optimal Control Characterization}
By condition (iv) (pointwise minimization of the Hamiltonian), the optimal control \( u^*(\cdot) \) satisfies the pointwise optimality condition:
\[
H_u^*(k) = 0 \quad \text{a.s.} \quad \forall k \in \mathbb{T}.
\]
For any perturbed admissible control \( u_k^\varepsilon \), this pointwise condition further implies:
\[
\langle H_u^*(k), u_k^\varepsilon - u_k^* \rangle \geq 0 \quad \forall k \in \mathbb{T}.
\]

\subsection*{Step 5: Final Inequality Assembly}
Combining the estimates from Steps 2–4, we substitute back into the cost difference expression:
\[
\begin{aligned}
J(u(\cdot)) - J(u^*(\cdot)) &\geq \mathbb{E} \left[ \sum_{k=0}^{N-1} \underbrace{\langle H_u^*(k), u_k^\varepsilon - u_k^* \rangle}_{\geq 0} \right] + \underbrace{\mathbb{E}[\Sigma_2 + \Sigma_3]}_{\geq 0} \\
&\geq 0.
\end{aligned}
\]
Equality holds if and only if \( u(\cdot) = u^*(\cdot) \), which confirms the optimality of \( u^*(\cdot) \). For the alternative case in condition (i) (interchanged roles of \( \Phi \) and \( \Lambda \)), the proof follows symmetrically by repeating the above reasoning with reversed boundary conditions.
\end{proof}

		\section{Application in linear quadratic problem} \label{sec:4}
		\subsection{ Problem Background and Modeling Motivation}
		
		In modern power systems with high penetration of renewable generation, energy storage systems (ESS), such as grid-scale lithium-ion batteries, are crucial for maintaining system reliability, cost efficiency, and supply-demand balance. These units are typically operated by energy providers or system aggregators to absorb surplus electricity during periods of low demand and release energy during peak hours. However, determining how and when to charge or discharge these devices remains a complex challenge due to stochastic variations in electricity prices, inevitable energy losses during storage and conversion, and real-time demand fluctuations from the grid side.
		
		The problem we investigate focuses on the optimal operation of a storage unit over a finite decision horizon. The goal is to minimize the total expected operational cost while meeting critical terminal requirements such as frequency regulation, reserve provision, or contractual obligations in the electricity market. Achieving this requires the controller to make real-time decisions that account not only for the current energy level and market conditions, but also for future obligations and uncertainties that may impact system feasibility and cost.
		
		Let \( k = 0, 1, \dots, N \) denote the discrete-time index. At each time step, the controller must decide the charge or discharge power \( u_k \) (in MW), while observing the current battery energy level \( x_k \) (in MWh) and considering anticipated future requirements. Importantly, discharging too much too early may leave insufficient energy to fulfill terminal constraints, while excessive conservatism may result in missed economic opportunities. Therefore, a forward-looking model that embeds feedback from future cost estimation into current dynamics is essential.
		
		To this end, we formulate the system using a fully coupled forward-backward stochastic difference equation (FBS$\Delta$E) model. The forward equation describes how the battery's physical energy state evolves in response to control, noise, and future reserve anticipation:
		
		\begin{equation}
			x_{k+1} = A_k x_k - Q_k y'_{k+1} + C_{1k} u_k + \left( B_k^\top x_k - L_k z'_{k+1} + C_{2k} u_k \right) \omega_k,
			\label{eq:forward}
		\end{equation}
		
		where \( x_k \) denotes the energy available in the battery at time \( k \); the term \( A_k x_k \) captures natural self-discharge and retention; \( u_k \in \mathbb{R} \) is the energy dispatch decision, where \( u_k > 0 \) represents discharging and \( u_k < 0 \) charging; \( y'_{k+1} \) stands for the estimated reserve demand at the next stage; \( z'_{k+1} \) measures the system's sensitivity to uncertainty; and \( \omega_k \) models stochastic perturbations, such as price volatility, load uncertainty, or device inefficiencies.
		
		The backward equation estimates the cost-to-go or energy pressure from future obligations, feeding back into the present through recursive coupling:
		
		\begin{equation}
			y_k = A_k^\top y'_{k+1} + B_k z'_{k+1}+  A_{1k} x_{k} + C_{3k} u_k,
			\label{eq:backward}
		\end{equation}
		
		where \( y_k \) represents the required amount of energy that should be preserved at time \( k \) to ensure system viability at future stages. The variable \( z'_{k+1} \) adjusts for the degree of risk sensitivity in this estimation. This backward recursion enforces long-term constraints while shaping current actions through anticipative feedback.
		
		The initial and terminal conditions are defined as follows:
		
		\begin{equation}
			x_0 = -N y_0, \quad y_N = N_1 x_N,
			\label{eq:boundary}
		\end{equation}
		
		where \( x_0 \) is initialized based on the initial risk parameter \( y_0 \), and \( y_N \) enforces a hard constraint on terminal energy availability, with \(  N_1 x_N \) being a predefined requirement for scheduled events such as reserve participation or emergency backup.
		
		This modeling framework ensures that every charging or discharging decision integrates both real-time physical dynamics and long-horizon cost-risk tradeoffs, enabling robust and anticipative energy management under uncertainty.

\label{sec:application_linear_quadratic}


{
\subsection{Reformulation as a Linear-Quadratic Problem}
\label{subsec:reformulation_lq}

To simplify analytical derivation and align with the stochastic maximum principle in Section~\ref{sec:3}, we convert the FBS$\Delta$E-based ESS model into a \textbf{linear-quadratic (LQ) stochastic control problem}. This embeds the fully coupled FBS$\Delta$E from Section 4.1 into a quadratic cost framework, where each term maps to a physical/economic cost component—ensuring theoretical tractability while preserving real-world meaning.

The LQ problem is a cornerstone of control theory, as it provides explicit optimal controller structures and enables efficient numerical solutions. Below, we define the LQ system coefficients, state equation, and cost functional, consistent with the theoretical framework.

The control system coefficients are defined as linear/quadratic functions of states, controls, and their expectations, matching the FBS$\Delta$E dynamics:
\[
\left\{
\begin{aligned}
b(k) & :=A_{k} x_{k}-Q_{k} y_{k+1}'+C_{1 k} u_{k}, \\
\sigma(k) & :=B_{k}^{\top} x_{k}-L_{k} z_{k+1}'+C_{2 k} u_{k}, \\
f(k+1) & :=-\left(A_{k}^{\top} y_{k+1}'+B_{k} z_{k+1}'+ A_{1k} x_{k}+C_{3 k} u_{k}\right), \\
l(k) & :=\frac{1}{2}\left[\langle D_{k} x_{k}, x_{k}\rangle + \langle R_{k} y_{k+1}', y_{k+1}'\rangle + \langle S_{k} z_{k+1}', z_{k+1}'\rangle + \langle C_{4 k} u_{k}, u_{k}\rangle\right], \\
\varphi(x) & :=\frac{1}{2} \langle M_{N} x, x\rangle, \\
\gamma(y) & :=\frac{1}{2} \langle M_{0} y, y\rangle.
\end{aligned}
\right.
\]
Here, $A(\cdot), B(\cdot), A_1(\cdot)\in L_{\mathbb{F}}^{\infty}(\mathbb{T} ; \mathbb{R}^{n \times n})$; $C_{1}(\cdot), C_{2}(\cdot), C_{3}(\cdot) \in L_{\mathbb{F}}^{\infty}(\mathbb{T} ; \mathbb{R}^{n \times m})$; $C_{4}(\cdot) \in L_{\mathbb{F}}^{\infty}(\mathbb{T} ; \mathbb{S}^{m})$; $ Q(\cdot),L(\cdot), D(\cdot),R(\cdot), S(\cdot) \in L_{\mathbb{F}}^{\infty}(\mathbb{T} ; \mathbb{S}^{n})$; $M_{0} \in \mathbb{S}^{n}$; and $M_{N} \in L_{\mathcal{F}_{N-1}}^{2}(\Omega ; \mathbb{S}^{n})$. For simplicity, we assume that the martingale difference sequence $\omega(\cdot)$ is one-dimensional.

Then the state equation is given by:
\begin{equation}
\left\{
\begin{aligned}
x_{k+1} &=A_{k} x_{k}-Q_{k} y_{k+1}^{\prime}+C_{1 k} u_{k}+\left(B_{k}^{\top} x_{k}-L_{k} z_{k+1}^{\prime}+C_{2 k} u_{k}\right) \omega_{k}, \quad k \in \mathbb{T}, \\
y_{k} &=A_{k}^{\top} y_{k+1}^{\prime}+ A_{1k} x_{k}+B_{k} z_{k+1}^{\prime}+C_{3 k} u_{k}, \\
x_{0} &=-N_0 y_{0}, \\
y_{N} &= N_1 x_N,
\end{aligned}
\right. \label{eq:state_equation}
\end{equation}
where $y_{k+1}' = \mathbb{E}[y_{k+1} \mid \mathcal{F}_{k-1}]$, $z_{k+1}' = \mathbb{E}[y_{k+1} \omega_{k} \mid \mathcal{F}_{k-1}]$, $N_0 \in \mathbb{S}^{n}$, and $N_1 \in L_{\mathcal{F}_{N-1}}^{2}(\Omega ; \mathbb{S}^{n})$ .

And the cost functional is defined as:
\begin{equation}
J(u(\cdot))=\frac{1}{2} \mathbb{E}\left\{\langle M_{N} x_{N}, x_{N}\rangle + \langle M_{0} y_{0}, y_{0}\rangle + \sum_{k=0}^{N-1}\left[\langle D_{k} x_{k}, x_{k}\rangle + \langle R_{k} y_{k+1}', y_{k+1}'\rangle + \langle S_{k} z_{k+1}', z_{k+1}'\rangle + \langle C_{4 k} u_{k}, u_{k}\rangle\right]\right\}. \label{eq:cost_functional}
\end{equation}

Each term in the cost functional has a clear physical interpretation: $\langle D_{k} x_{k}, x_{k}\rangle$ penalizes excessive energy storage (to account for battery aging and self-discharge); $\langle C_{4 k} u_{k}, u_{k}\rangle$ represents control effort and energy conversion losses; $\langle R_{k} y_{k+1}', y_{k+1}'\rangle$ and $\langle S_{k} z_{k+1}', z_{k+1}'\rangle$ penalize deviations from anticipated future energy obligations and excessive sensitivity to uncertainty, respectively; and the terminal/initial terms ($\langle M_{N} x_{N}, x_{N}\rangle, \langle M_{0} y_{0}, y_{0}\rangle$) enforce performance requirements at the start and end of the control horizon.

\begin{pro}
\label{prob:4.1}
Find an admissible control $u^{*}(\cdot) \in \mathbb{U}$ such that:
\[
J\left(u^{*}(\cdot)\right)=\inf _{u(\cdot) \in \mathbb{U}} J(u(\cdot)),
\]
where the set of \textbf{admissible controls} $\mathbb{U}$ is defined as:
\[
\mathbb{U}:=\left\{u=\left(u_{0}, u_{1}, \ldots, u_{N-1}\right) \mid u_{k} \text { is } \mathcal{F}_{k-1}\text{-measurable}, u_{k} \in \mathbb{R}^{m}, \mathbb{E}\left[\sum_{k=0}^{N-1}\left|u_{k}\right|^{2}\right]<\infty\right\}.
\]
Here, $U_{k} = \mathbb{R}^{m}$ (i.e., the control domain at each time step $k$ is the $m$-dimensional Euclidean space).
\end{pro}

To guarantee the well-posedness (existence and uniqueness of solutions) of the FBS$\Delta$E-based LQ system, we impose the following monotonicity and convexity conditions (where matrix inequalities "$\geq$" denote positive semi-definiteness, and "$I_m$" is the $m$-dimensional identity matrix):

\begin{ass}
\label{ass:4.1}
The system coefficients satisfy the following conditions for all $k \in \mathbb{T}$:
\begin{enumerate}
    \renewcommand{\labelenumi}{(\roman{enumi})} 
    \item \textbf{Forward-backward dynamics monotonicity}:
    Coefficients associated with state evolution, future obligation anticipation, and cost penalties are positive semi-definite:
    \begin{align*}
    D_k \geq 0, \quad Q_k \geq 0, \quad L_k \geq 0, \quad R_k \geq 0, \quad S_k \geq 0,
    \end{align*}
    where $D_k$ and $S_k$ correspond to the state storage penalty and uncertainty sensitivity penalty in the cost functional, respectively; $Q_k$ and $L_k$ are linked to future reserve anticipation in the forward dynamics; $R_k$ penalizes deviations from expected future obligations.

    \item \textbf{Boundary condition monotonicity}:
    Coefficients governing initial and terminal constraints are non-degenerate to ensure feasibility:
    \begin{align*}
    N_0 > 0, \quad N_1 > 0, \quad M_0 \geq 0, \quad M_N \geq 0,
    \end{align*}
    where $N_0$ is the positive scaling factor in the initial state constraint $x_0 = -N_0 y_0$; $N_1$ is the positive coupling matrix in the terminal constraint $y_N = N_1 x_T$; $M_0$ and $M_N$ are the initial and terminal cost weighting matrices, respectively.

    \item \textbf{Control cost strict convexity}:
    There exists a constant $\delta > 0$ such that the control effort weighting matrix is uniformly positive definite:
    \begin{align*}
    C_{4k} \geq \delta I_m,
    \end{align*}
    which ensures the uniqueness of the optimal control and avoids excessive control actions.
\end{enumerate}
\end{ass}

\begin{thm}[Existence and Uniqueness of Solutions]
\label{thm:modified_existence}
Under Assumption~\ref{ass:4.1}, for any admissible control $u(\cdot) \in \mathbb{U}$, the modified FBS$\Delta$E system~\eqref{eq:state_equation} admits a unique solution $(x(\cdot), y(\cdot)) \in N^2(\overline{\mathbb{T}}; \mathbb{R}^{2n})$. Moreover, the following estimate holds:
\[
\mathbb{E}\left[\sum_{k=0}^N |x_k|^2 + \sum_{k=0}^N |y_k|^2\right] \leq K \mathbb{E}[I],
\]
where $I$ depends on the norms of the coefficients (including $N_0, N_1, A_1(\cdot)$) and $K > 0$ is a constant independent of $u(\cdot)$.
\end{thm}

\begin{proof}
The result follows directly from Lemma~\ref{thm:2.3}  since Assumption~\ref{ass:4.1} ensures the system satisfies the generalized monotonicity conditions (Case 2: $\mu=0, \nu>0$) required by the lemma ~\ref{thm:2.3}. Thus, the FBS$\Delta$E system has a unique solution, and the a priori estimate is derived via standard energy inequality techniques.
\end{proof}

The Hamiltonian function for this LQ problem is constructed as (consistent with the general Hamiltonian definition in Section~\ref{sec:3}):
\begin{equation}
\begin{aligned}
H(k) &= \left\langle b(k), p_{k+1}' \right\rangle + \left\langle \sigma(k), q_{k+1}' \right\rangle + \left\langle f(k+1), r_{k} \right\rangle + l(k) \\
&= \left\langle A_{k} x_{k}-Q_{k} y_{k+1}'+C_{1 k} u_{k}, p_{k+1}' \right\rangle + \left\langle B_{k}^{\top} x_{k}-L_{k} z_{k+1}'+C_{2 k} u_{k}, q_{k+1}' \right\rangle \\
&\quad -\left\langle A_{k}^{\top} y_{k+1}'+B_{k} z_{k+1}'+ A_{1k} x_{k}+C_{3 k} u_{k}, r_{k} \right\rangle \\
&\quad + \frac{1}{2}\left[\langle D_{k} x_{k}, x_{k}\rangle + \langle R_{k} y_{k+1}', y_{k+1}'\rangle + \langle S_{k} z_{k+1}', z_{k+1}'\rangle + \langle C_{4 k} u_{k}, u_{k}\rangle\right], \label{eq:hamiltonian}
\end{aligned}
\end{equation}
where $p_{k+1}' = \mathbb{E}[p_{k+1} \mid \mathcal{F}_{k-1}]$ (conditional expectation of adjoint state $p_{k+1}$) and $q_{k+1}' = \mathbb{E}[p_{k+1} \omega_{k} \mid \mathcal{F}_{k-1}]$ (conditional covariance of $p_{k+1}$ and $\omega_k$).

The adjoint equation associated with the admissible triple $(u(\cdot), x(\cdot), y(\cdot))$ is derived from the general adjoint equation~\eqref{eq:2.17} (Section~\ref{sec:3}) and takes the form:
\begin{equation}
\left\{
\begin{aligned}
r_{k+1} &=Q_{k}^{\top} p_{k+1}^{\prime}+A_{k} r_{k}-R_{k} y_{k+1}^{\prime}+\left(L_{k}^{\top} q_{k+1}^{\prime}+B_{k}^{\top} r_{k}-S_{k} z_{k+1}^{\prime}\right) \omega_{k}, \\
p_{k} &=A_{k}^{\top} p_{k+1}^{\prime}+B_{k} q_{k+1}^{\prime}+A_{1k}^{\top} r_{k}+D_{k} x_{k}, \\
r_{0} &=-M_{0} y_{0}+N_0 p_0, \\
p_{N} &=M_{N} x_{N},
\end{aligned}
\right. \label{eq:adjoint_equation}
\end{equation}
where $r_{k}$ (adjoint to $y_{k}$) and $p_{k}$ (adjoint to $x_{k}$) capture the sensitivity of the cost functional~\eqref{eq:cost_functional} to variations in $y_{k}$ and $x_{k}$, respectively.

\begin{thm}[Necessary and Sufficient Condition for Optimality]
\label{thm:4.2}
Suppose Assumption~\ref{ass:4.1} holds. Then, \textbf{there exists a unique optimal pair $(u^{*}(\cdot), x^{*}(\cdot), y^{*}(\cdot))$ for Problem~\ref{prob:4.1} if and only if} the fully coupled system consisting of the state equation~\eqref{eq:state_equation} and the adjoint equation~\eqref{eq:adjoint_equation} admits a unique solution $(x^{*}(\cdot), y^{*}(\cdot), p^{*}(\cdot), r^{*}(\cdot))$, and the control $u^{*}(\cdot)$ admits an explicit dual representation as:
\begin{equation}
u_{k}^{*} = C_{4k}^{-1}\left(-C_{1k}^{\top} p_{k+1}^{\prime*} - C_{2k}^{\top} q_{k+1}^{\prime*} + C_{3k}^{\top} r_{k}^{*}\right), \quad \forall k \in \mathbb{T}, \label{eq:optimal_control}
\end{equation}
where $p_{k+1}^{\prime*} = \mathbb{E}[p_{k+1}^{*} \mid \mathcal{F}_{k-1}]$, $q_{k+1}^{\prime*} = \mathbb{E}[p_{k+1}^{*} \omega_{k} \mid \mathcal{F}_{k-1}]$, and $(p^{*}(\cdot), r^{*}(\cdot))$ are the adjoint processes associated with $(x^{*}(\cdot), y^{*}(\cdot))$.
\end{thm}

\begin{proof}
We establish the result by proving both necessity and sufficiency.

\noindent\textbf{(Necessity)} Assume that $(u^{*}(\cdot), x^{*}(\cdot), y^{*}(\cdot))$ is an optimal pair for Problem~\ref{prob:4.1}. By Theorem~\ref{thm:3.4} (Necessary Stochastic Maximum Principle, Section~\ref{sec:3}), the optimality condition requires:
\[
\left\langle H_{u}^{*}(k), u - u_{k}^{*} \right\rangle \geq 0, \quad \forall u \in \mathbb{R}^{m}, \ \mathbb{P}\text{-a.s.},
\]
where $H_{u}^{*}(k)$ denotes the partial derivative of the Hamiltonian~\eqref{eq:hamiltonian} with respect to $u$, evaluated along the optimal trajectory.

A direct computation from~\eqref{eq:hamiltonian} yields:
\[
H_{u}(k) = C_{1k}^{\top} p_{k+1}' + C_{2k}^{\top} q_{k+1}' - C_{3k}^{\top} r_{k} + C_{4k} u_{k}.
\]
Given that the control domain $U_{k} = \mathbb{R}^{m}$ is unconstrained, the variational inequality implies $H_{u}^{*}(k) = 0$. Solving $H_{u}^{*}(k) = 0$ for $u_{k}^{*}$ and invoking the uniform positive definiteness of $C_{4k}$ (Assumption~\ref{ass:4.1}(iii), $C_{4k} \geq \delta I_m$) ensures $C_{4k}^{-1}$ exists, yielding the explicit representation~\eqref{eq:optimal_control}.

Moreover, since $(x^{*}(\cdot), y^{*}(\cdot))$ satisfies the state equation~\eqref{eq:state_equation} (with $x_0=-N_0 y_0$ and $y_N=N_1 x_T$) and the corresponding adjoint processes $(p^{*}(\cdot), r^{*}(\cdot))$ satisfy~\eqref{eq:adjoint_equation} (with $r_0=-M_0 y_0 + N_0 p_0$), the coupled system~\eqref{eq:state_equation}--\eqref{eq:adjoint_equation} is consistent. The uniqueness of the solution follows from the uniqueness of the optimal control $u^{*}(\cdot)$ (ensured by $C_{4k}$'s strict convexity).

\noindent\textbf{(Sufficiency)} Conversely, suppose the coupled system~\eqref{eq:state_equation}--\eqref{eq:adjoint_equation} possesses a unique solution $(x^{*}(\cdot), y^{*}(\cdot), p^{*}(\cdot), r^{*}(\cdot))$, and define $u^{*}(\cdot)$ via~\eqref{eq:optimal_control}. To establish optimality, we verify the conditions of Theorem~\ref{thm:3.5} (Sufficient Stochastic Maximum Principle, Section~\ref{sec:3}):

(i) \textbf{Boundary conditions}: The initial condition $x_{0} = -N_0 y_{0}$ is linear in $y_{0}$, and the terminal condition $y_{N} = N_1 x_T$ is linear in $x_T$—both satisfy the "linear/constant boundary" requirement in Theorem~\ref{thm:3.5}(i).

(ii) \textbf{Convexity of cost functions}: The terminal cost $\varphi(x) = \frac{1}{2} M_{N} x^{2}$ and initial cost $\gamma(y) = \frac{1}{2} M_{0} y^{2}$ are convex due to $M_{N}, M_{0} \geq 0$ (Assumption~\ref{ass:4.1}(ii)), fulfilling Theorem~\ref{thm:3.5}(ii).

(iii) \textbf{Convexity of the Hamiltonian}: For fixed $p', q', r$, $H(k)$ is convex in $(x, y', z', u)$:
- Quadratic terms: $\frac{1}{2}\left[ D_k x_k^2 + R_k (y'_{k+1})^2 + S_k (z'_{k+1})^2 + C_{4k} u_k^2 \right]$ are convex (nonnegative definite matrices $D_k, R_k, S_k$ and positive definite $C_{4k}$);
- Linear terms: $\langle A_k x_k, p'_{k+1} \rangle, \langle -Q_k y'_{k+1}, p'_{k+1} \rangle$ etc. are convex (linear functions are convex).
The sum of convex functions is convex, satisfying Theorem~\ref{thm:3.5}(iii).

(iv) \textbf{Minimization of the Hamiltonian}: $H(k)$ is strictly convex in $u$ (due to $C_{4k} \geq \delta I_m$), so its minimum is uniquely attained at $H_u(k) = 0$. By construction, $u_k^{*}$ in~\eqref{eq:optimal_control} satisfies this condition, ensuring:
\[
H\left(k, x^{*}_{k}, y'^{*}_{k+1}, z'^{*}_{k+1}, u^{*}_{k}, p'^{*}_{k+1}, q'^{*}_{k+1}, r^{*}_{k}\right) = \min_{u \in \mathbb{R}^{m}} H\left(\cdot\right),
\]
which verifies Theorem~\ref{thm:3.5}(iv).

Since all conditions of Theorem~\ref{thm:3.5} are satisfied, $(x^{*}(\cdot), y^{*}(\cdot), u^{*}(\cdot))$ is optimal. Uniqueness follows from the coupled system's unique solution. The proof is complete.
\end{proof}}

\section{Conclusion}

This paper addresses the stochastic maximum principle for a class of fully coupled forward-backward stochastic difference equations (FBS$\Delta$Es) in discrete time. We have established a variational formulation of the cost functional and derived both necessary and sufficient optimality conditions under the assumption of a convex control domain. The analysis is based on a Hamiltonian framework and adjoint equations, without requiring classical monotonicity assumptions.
To demonstrate the applicability of the theoretical results, we further investigate a discrete-time linear-quadratic control problem motivated by energy storage management, where the system dynamics are modeled via fully coupled FBS$\Delta$Es with anticipative structure.

The proposed results contribute to the growing theory of discrete-time stochastic control by handling nonlinearity, full coupling, and coupled boundary conditions in a unified setting. Future directions include extensions to mean-field systems, the incorporation of learning-based or data-driven control mechanisms, and the development of efficient numerical schemes for high-dimensional problems.

	\bibliographystyle{plain}  

\end{document}